\documentclass[10pt]{amsart}
\usepackage{fancyhdr}
\usepackage[english]{babel}
\usepackage{amssymb}
\usepackage{mathrsfs}
\usepackage{enumerate}
\usepackage{color}
\usepackage{ifpdf}
\usepackage{tikz}
\usepackage{tikz-cd}
\usetikzlibrary{decorations.pathmorphing,arrows}
\usepackage[initials]{amsrefs}
\usepackage{hyperref}

\usepackage{todonotes}

\usepackage{color}

\usepackage[margin=1in]{geometry}  
\usepackage{graphicx}              
\usepackage{amsmath}               
\usepackage{amsfonts}              
\usepackage{amsthm}                
\usepackage{amssymb} 
\usepackage{enumitem}


\theoremstyle{plain}
\newtheorem{thm}{Theorem}[section]
\newtheorem{thmx}{Theorem}

\newtheorem{cor}[thm]{Corollary}
\newtheorem{prop}[thm]{Proposition}

\theoremstyle{definition}
\newtheorem{defn}[thm]{Definition}
\newtheorem{ex}{Example}

\theoremstyle{remark}
\newtheorem*{rem}{\textbf{Remark}}
\newtheorem*{rems}{\textbf{Remarks}}

\newtheoremstyle{case}{}{}{}{}{}{:}{ }{}
\theoremstyle{case}
\newtheorem{case}{Case}


\newcommand{\ME}{\overset{\mathrm{ME}}{\sim}}
\newcommand{\OrE}{\overset{\mathrm{OE}}{\sim}}
\newcommand{\MI}{\overset{\mathrm{MI}}{\rightarrowtail}}
\newcommand{\SMI}{\overset{\mathrm{SMI}}{\rightarrowtail}}
\newcommand{\acts}{\curvearrowright}
\newcommand{\defeq}{\mathrel{\mathop:}=}

\title{Measurable Imbeddings, Free Products, and Graph Products}
\author{\"Ozkan Demir }

\begin{document}

\maketitle

\begin{abstract}
	We study Measurable Imbeddability between two countable groups, which is an order-like generalization of Measure Equivalence that allows the imbedded group to have an infinite measure fundamental domain. We prove if $\Lambda_1$ measurably imbeds into $\Gamma_1$, and $\Lambda_2$ measurably imbeds into $\Gamma_2$ under an additional assumption that lets the corresponding fundamental domains to be arranged in a special way, then $\Lambda_1 * \Lambda_2$ measurably imbeds into $\Gamma_1 * \Gamma_2$. Building upon the techniques used, we show that the analogous result holds for graph products of groups.
\end{abstract}
\section{Introduction and Statement of the Main Results}
Measure equivalence (ME) is an equivalence relation between discrete countable groups that
measure-theoretically generalizes the notion of being virtually isomorphic.
It can be viewed as a measured analogue of quasi-isometry in geometric group theory;
it was introduced by Gromov in \cite{Gro}. The prominent examples of ME groups are lattices in the same locally compact second countable (lcsc) group.

\vspace{1mm}
We study a one-sided generalization of this notion, called Measurable Imbedding (MI), in which one of the groups is allowed to have an infinite measure fundamental domain inside the corresponding coupling. It gives a partial ordering within the class of discrete countable groups. As an example, the definition will cover the following situation: If $\Gamma$ is a lattice in a lcsc group $G$ and $\Lambda$ is any discrete subgroup in $G$, then $\Lambda$ measurably imbeds into $\Gamma$. The purpose of this paper is to show that the notion of meaurable imbeddability is preserved under free products and more general graph products of groups, under a natural assumption on the coupling index.

\vspace{1mm}
A remarkable result involving MI is presented by Gaboriau and Lyons in \cite{GabLy}, where they give a Measurable-Group-Theoretic solution to the famous von Neumann-Day problem. They showed that for any non-amenable group $\Gamma$, one can find an essentially free action of the free group $\mathbb{F}_2 \acts [0,1]^\Gamma$ such that, almost every $\Gamma$-orbit of the Bernoulli action $\Gamma \acts [0,1]^\Gamma$ decomposes into $\mathbb{F}_2$-orbits. Later in \cite{BerVae}, Berendschot and Vaes gave an explicit construction of a Measurable Imbedding (or Measure Equivalence Embedding, as called in their paper) of a free group to any non-amenable group (even in the generality of locally compact groups), in the sense of Definition \ref{def:MI}. This adds yet another item to the endless list of equivalent characterizations of amenability: A group $\Gamma$ is amenable if and only if $\mathbb{F}_2$ does not measurably imbed into $\Gamma$.

\vspace{1mm}
To the author's knowledge, the notion of MI is introduced by Bader, Furman, and Shalom in an unpublished preprint, and first appeared in literature in a paper by Sako (see Definition 2.1 in \cite{Sak}), where he proves that Ozawa's class S is an invariant under Measure Equivalence. He actually proves a stronger statement: if $\Lambda$ measurably imbeds into $\Gamma$ and $\Gamma$ is in class S, then $\Lambda$ is in class S. One can show similar statements hold for amenability and Haagerup's approximantion property (or a-T-menability) by following the same ideas in the proof of them being ME invariants (e.g., see Section 3.1.1 in \cite{Furman}).

\vspace{1mm}
Our main results do not hold at the generality of MI, so we introduce a slight strengthening of this notion called Strict Measurable Imbedding (SMI), which allows us to arrange the fundamental domains so that the fundamental domain having smaller measure is contained in the other. In Proposition \ref{prop:3}, we show that this notion is equivalent to the notion of a Random Subgroup introduced by Monod in \cite{Mon}.

\vspace{1mm}
Orbit Equivalence (OE) is a special case of SMI, which corresponds to the case where we can choose the same set as the fundamental domain for both of the actions. Our first main result is a generalization of Gaboriau's result ($\mathbf{P_{ME}6}$  in \cite{Gab}) which says finite free products of OE groups are OE.

\begin{thmx}
\label{A}
If $\Lambda_1 \SMI \Gamma_1$ and $\Lambda_2 \SMI \Gamma_2$, then $\Lambda_1 * \Lambda_2 \SMI \Gamma_1 * \Gamma_2$
 Moreover, for given $(\Lambda_i\rightarrowtail\Gamma_i)$-couplings $\Sigma_i$ with indices $[\Gamma_i:\Lambda_i]_{\Sigma_i}=c_i$ for $i=1,2$, one can find a $(\Lambda_1 * \Lambda_2\rightarrowtail\Gamma_1 * \Gamma_2)$-coupling $\Sigma$ where:
 \begin{equation*}
    [\Gamma_1 * \Gamma_2:\Lambda_1 * \Lambda_2]_{\Sigma} = 
    \begin{cases}
    1, & \text{if}\ c_1=c_2=1 \\
    \infty, & \text{otherwise}
    \end{cases}
\end{equation*}
\end{thmx}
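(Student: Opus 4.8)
The plan is to build the coupling $\Sigma$ for the free products by gluing together copies of the given couplings $\Sigma_1$ and $\Sigma_2$ in a way that mirrors how the free product $\Lambda_1 * \Lambda_2$ (and $\Gamma_1 * \Gamma_2$) is assembled from its factors. Since we are in the SMI setting, I may arrange each $(\Lambda_i \rightarrowtail \Gamma_i)$-coupling $\Sigma_i$ so that a $\Lambda_i$-fundamental domain $Y_i$ sits inside a $\Gamma_i$-fundamental domain $X_i$, with $X_i = \bigsqcup_{j} g_{i,j} Y_i$ running over a set of coset representatives for $\Lambda_i$ in $\Gamma_i$ realized inside the coupling. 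The structural idea is the classical one behind free-product constructions in orbit equivalence (Gaboriau's $\mathbf{P_{ME}6}$): I would take a measured analogue of the Bass--Serre tree for $\Gamma_1 * \Gamma_2$ and assemble $\Sigma$ as a bundle of copies of $\Sigma_1$ and $\Sigma_2$ indexed by the edges/vertices of this tree, with the commuting $(\Lambda_1*\Lambda_2)$- and $(\Gamma_1*\Gamma_2)$-actions defined by letting a reduced word act syllable-by-syllable, each syllable acting through the corresponding factor's coupling.

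Concretely, I would first set up the model space. Using the tree/normal-form description, every element of $\Gamma_1 * \Gamma_2$ has a unique reduced expression as an alternating product of nontrivial elements of $\Gamma_1$ and $\Gamma_2$, and likewise for $\Lambda_1 * \Lambda_2$. I would define $\Sigma$ as an appropriate inverse/projective assembly where over the ``spine'' one places a base copy and then attaches, along each coset direction dictated by the decompositions $X_i = \bigsqcup_j g_{i,j} Y_i$, translated copies of the other coupling. The two actions are then defined so that elements of $\Gamma_i$ (resp. $\Lambda_i$) act on the appropriate fibers exactly as they do in $\Sigma_i$, and the alternating structure of reduced words guarantees the actions are well-defined, measure-preserving, commuting, and free of the requisite finiteness. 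The key point to check is that $Y_1 * Y_2$ — built by the same alternating recipe from the $Y_i$ — serves as a fundamental domain for $\Lambda_1 * \Lambda_2$ sitting inside a fundamental domain for $\Gamma_1 * \Gamma_2$ built from the $X_i$, so that the SMI containment is preserved.

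For the index computation I would track measures through this assembly. The index $[\Gamma_1 * \Gamma_2 : \Lambda_1 * \Lambda_2]_\Sigma$ is the ratio of the measures of the two fundamental domains. When $c_1 = c_2 = 1$ each $\Lambda_i$-domain equals (up to null sets) the $\Gamma_i$-domain, i.e. we are in the OE case $Y_i = X_i$, and the alternating assembly produces equal fundamental domains, giving index $1$; this is exactly Gaboriau's free-product-of-OE statement recovered as a special case. When at least one $c_i > 1$, the tree has infinitely many vertices contributing genuinely larger $\Gamma$-domains at infinitely many levels, and summing the geometric-like contributions of the extra cosets over the infinitely many syllables forces the total ratio to diverge, yielding index $\infty$. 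I would make this rigorous by exhibiting an explicit countable disjoint union of $\Lambda_1*\Lambda_2$-translates inside the $\Gamma_1*\Gamma_2$-domain whose total measure is a divergent series (a constant multiple of $c_1$ or $c_2$ accumulating once per level of the tree).

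The main obstacle I expect is verifying that the two assembled actions genuinely commute and are simultaneously measure-preserving with the claimed fundamental-domain structure — in other words, that the syllable-by-syllable definition on reduced words is consistent across the overlaps where one passes from a $\Gamma_1$-syllable to a $\Gamma_2$-syllable. The SMI hypothesis is precisely what lets me nest $Y_i \subseteq X_i$ so that these transitions are compatible; without the ``additional assumption'' allowing the domains to be arranged this way (plain MI), the gluing would fail to produce a single coherent coupling, which is why the theorem is stated for SMI rather than MI. I would therefore devote the bulk of the argument to a careful bookkeeping lemma showing the action of a reduced word is independent of how it is bracketed, and that freeness/measure-preservation is inherited factor-by-factor.
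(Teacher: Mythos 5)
Your plan has a genuine gap, and its setup rests on a false structural assumption. First, the nesting is backwards: in an SMI-coupling it is the finite-measure $\Gamma_i$-fundamental domain $X_i$ that is contained in the $\Lambda_i$-fundamental domain $Y_i$ (this is what index $\geq 1$ means), not the other way around. More seriously, the decomposition $X_i = \bigsqcup_j g_{i,j} Y_i$ over ``coset representatives for $\Lambda_i$ in $\Gamma_i$'' does not exist: $\Lambda_i$ is not a subgroup of $\Gamma_i$ in this setting, so there are no cosets, and in a general coupling neither fundamental domain is a disjoint union of group translates of the other --- one only has $Y_i = \bigsqcup_{\gamma \in \Gamma_i} \left( Y_i \cap \gamma X_i \right)$, whose pieces are proper subsets of the translates $\gamma X_i$. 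Any coupling of non-integer index (say $3/2$) rules out your tiling outright, since such a tiling would force the index to be an integer or $\infty$. Because your entire Bass--Serre assembly is indexed ``along each coset direction dictated by the decompositions $X_i = \bigsqcup_j g_{i,j} Y_i$'', the gluing recipe has no meaning as stated; it implicitly imports the special structure of the OE case (Gaboriau's setting, where $Y_i = X_i$) into the general SMI case, which is precisely where that structure is absent.

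Second, even granting a corrected setup, the proposal never actually defines the space $\Sigma$, the two commuting actions, or the claimed fundamental domains; the well-definedness, commutation, freeness, and measure-preservation of the syllable-by-syllable action --- which you correctly identify as the main obstacle --- are deferred to a ``bookkeeping lemma'' that is never formulated, and that verification is the content of the theorem rather than a routine check. The paper sidesteps the simultaneous gluing of two different couplings entirely: it first proves the one-factor statement that $\Lambda \SMI \Gamma$ implies $G * \Lambda \SMI G * \Gamma$ (Theorem \ref{thm:1}), using the explicit model $\widetilde{\Sigma} = (G*\Gamma) \times X$ furnished by the cocycle characterization of SMI (Proposition \ref{prop:1}), and exhibits the concrete $G*\Lambda$-fundamental domain $(\{e\} \times X) \sqcup \bigl(W \times (Y \setminus X)\bigr)$ via the identification $\widetilde{\Sigma} \cong W \times \Gamma \times X$. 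Theorem \ref{A} then follows by applying this twice, $\Lambda_1 * \Lambda_2 \SMI \Gamma_1 * \Lambda_2 \SMI \Gamma_1 * \Gamma_2$, and composing the two couplings by transitivity of SMI. This ``one factor at a time, then compose'' reduction is exactly the idea your approach is missing; with it, the index computation ($1$ if $c_1 = c_2 = 1$, else $\infty$, because $W$ is infinite and $\mu(Y \setminus X) > 0$) falls out of the explicit fundamental domain instead of a heuristic divergent-series argument.
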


A generalization of both free products and direct products is the notion of graph products of groups, which is thoroughly studied in Green's thesis (\cite{Green}). In \cite{HH} (Proposition 4.2) Horbez and Huang proved that the graph product of OE groups are OE, generalizing Gaboriau's result. Our second main result generalizes this result and Theorem \ref{A} for finitely generated groups:

\begin{thmx}
\label{B}
Let $\Theta$ be a finite simple irreducible graph with vertex set $V$. Let $H$ and $G$ be two graph products over $\Theta$, with nontrivial finitely generated vertex groups $\{H_v\}_{v \in V}$ and $\{G_v\}_{v \in V }$, respectively. Suppose for each $v \in V$, $H_v \SMI G_v$, with coupling index $c_v$. Then $H \SMI G$ with coupling index $\infty$, unless when all $c_v=1$, in which case the coupling index is also 1.
\end{thmx}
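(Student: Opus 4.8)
The plan is to prove Theorem B by induction on the structure of the graph product, using Theorem A (the free product case) as the building block together with the techniques developed for it. Before attacking the full graph product, I would first establish the direct-product case separately, since a graph product is built from free products (when vertices are non-adjacent) and direct products (when vertices are adjacent), and Theorem A only handles the free product. So the first key step is to prove an analogue of Theorem A for direct products: if $\Lambda_1 \SMI \Gamma_1$ and $\Lambda_2 \SMI \Gamma_2$, then $\Lambda_1 \times \Lambda_2 \SMI \Gamma_1 \times \Gamma_2$. This direction should be substantially easier: given $(\Lambda_i \rightarrowtail \Gamma_i)$-couplings $\Sigma_i$, the product space $\Sigma_1 \times \Sigma_2$ with the diagonal-type actions of $\Lambda_1 \times \Lambda_2$ and $\Gamma_1 \times \Gamma_2$ should be a coupling realizing the imbedding, and the index multiplies as $c_1 c_2$, which is $1$ exactly when both factors are $1$ and $\infty$ otherwise. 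This is consistent with the index behavior asserted in the theorem.

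The second, and central, step is to set up the induction on the number of vertices $|V|$ (or more precisely on the structure of $\Theta$) and to reduce the general graph product to a combination of free and direct products. The standard structural fact I would invoke is that for an irreducible graph $\Theta$, one can decompose the graph product either as a free product amalgamated over a sub–graph-product or, more usefully, peel off a single vertex: writing $\Theta = \mathrm{lk}(v) * \{v\} \cup (\Theta \setminus \{v\})$-type decompositions, the graph product $G$ over $\Theta$ can be expressed as an amalgamated or iterated construction involving graph products over proper induced subgraphs. I would choose a vertex $v$, split off its star, and express both $H$ and $G$ as being assembled from $H_v, G_v$ and the graph products over $\Theta \setminus \{v\}$ and over $\mathrm{lk}(v)$, so that the inductive hypothesis applies to these smaller graph products. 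The delicate part here is that the assembly uses amalgamated free products over the link subgroup, not plain free products, so I cannot directly quote Theorem A; instead I must show that the fundamental-domain arrangements produced by the inductive hypothesis are compatible along the amalgamated subgroup.

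This compatibility is where I expect the main obstacle to lie. In Theorem A, the essential gadget is the ability (coming from SMI as opposed to mere MI, via Proposition \ref{prop:3}) to arrange the fundamental domains so that the smaller one sits inside the larger, and then to glue copies of these domains combinatorially according to the free-product normal form. For graph products the normal form is more intricate (Green's normal form, with syllables and the shuffling relation from adjacent commuting vertices), so the combinatorial gluing of fundamental domains must respect both the free-product-style concatenation \emph{and} the commutation relations imposed by edges of $\Theta$. Concretely, I would build the coupling $\Sigma$ as a disjoint union, indexed by reduced words in the graph product $G$, of products of the pieces $\Sigma_v$ and the fundamental domains $X_v \subseteq \Sigma_v$; the actions of $H$ and $G$ are then defined by the normal-form bookkeeping, and one must check that the commutation relations act consistently on the glued space. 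Verifying that this yields a genuine measure-preserving coupling with a fundamental domain for each side, and that the nested containment of fundamental domains survives the gluing, is the technical heart.

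Finally, I would track the index throughout the construction. Since the index is multiplicative under direct products and behaves (per Theorem A) as the dichotomy $1$ versus $\infty$ under free products, the inductive step preserves the statement: the overall index is $1$ precisely when every $c_v = 1$ (so that at each stage both free-product and direct-product operations output index $1$), and is $\infty$ as soon as a single $c_v > 1$ enters, since that $\infty$ propagates through all subsequent free and direct products. I would close the induction by handling the base case $|V| = 1$, which is just the hypothesis $H_v \SMI G_v$ with index $c_v$, and then conclude Theorem B. The finitely-generated hypothesis on the vertex groups is what I expect to be needed to control the measurability of the gluing maps and the $\sigma$-finiteness of the resulting coupling.
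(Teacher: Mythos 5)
There is a genuine gap, and it sits exactly where you locate ``the technical heart'': your reduction expresses $G$ by peeling off a vertex as an amalgamated free product $G_{\Theta\setminus\{v\}} *_{G_{\mathrm{lk}(v)}} \bigl(G_v \times G_{\mathrm{lk}(v)}\bigr)$, but Theorem \ref{A} only covers plain free products, and you never supply the argument that SMI-couplings of the factors can be glued compatibly along the amalgamated subgroup. This is not a routine verification one can defer: for amalgamated products one needs the two couplings to restrict to the \emph{same} coupling of the amalgam (and the fundamental-domain containments to match on it), and nothing in the inductive hypothesis provides this. The paper's proof avoids amalgamation entirely by a different induction: it replaces one vertex group at a time, keeping the ambient graph fixed. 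The key step (Theorem \ref{thm:2}) assumes $H_w \SMI G_w$ at a single vertex $w$ and $H_v \cong G_v$ elsewhere, lets all vertex groups other than $H_w$ act trivially on $X$, extends the cocycle $\alpha$ to $\tilde\alpha\colon H\times X\to G$, and runs the same $G\times X$ construction as in the free-product case; Green's normal form and Proposition \ref{prop:2} replace the free-product word combinatorics, and the explicit fundamental domain is $\widetilde{Y}=(\{e\}\times X)\sqcup(\widetilde{W}\times(Y\setminus X))$, where $\widetilde{W}$ is a fundamental domain for $H_{\mathrm{lk}(v)}$ acting on the coset representatives $W$. Theorem \ref{B} then follows by composing these one-vertex replacements via transitivity of SMI.

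Your index bookkeeping also contains an outright error that the amalgamation gap cannot absorb. For direct products the index is $c_1c_2$, which is \emph{not} ``$1$ if both are $1$ and $\infty$ otherwise'': if $c_1=2$ and $c_2=1$ the product index is $2$. Consequently your propagation argument fails in the key case where exactly one $c_v$ is finite and $>1$: the star piece $H_v\times H_{\mathrm{lk}(v)} \SMI G_v\times G_{\mathrm{lk}(v)}$ then has finite index $c_v$, so there is no $\infty$ anywhere to propagate, and the $\infty$ asserted in Theorem \ref{B} would have to be \emph{created} by the unproven amalgamated step. In the paper, this is precisely the role of irreducibility: it guarantees $W$ properly contains $H_{\mathrm{lk}(v)}$, hence $\widetilde{W}$ is nontrivial, forcing $\tilde\mu(\widetilde{Y})=\infty$ whenever $\mu(Y\setminus X)>0$. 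The remark following the proof of Theorem \ref{thm:2} makes the same point from the other side: without irreducibility one can have $\widetilde{W}=\{e\}$ and the intermediate index is then $c_v$, which is exactly the direct-product behavior you misstated.
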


Irreducibility of the underlying graph prevents the decomposition of the graph as the join of two subgraphs, which corresponds to direct product in the group level. The similar result for direct products is known for ME groups, and same proof works for the SMI case. Thus, Theorem \ref{B} holds for non-irreducible graphs, but the index calculation will have some special cases.

\vspace{1mm}
In Section 2 we recall/give definitions and remarks for Measure Equivalence (ME), Measurable Imbeddings (MI), and Strict Measurable Imbeddings (SMI). Then, we investigate the relation between SMI and the notions of randomorphisms, randdembeddings and random subgroups introduced by Monod in \cite{Mon}. In Section 3, we prove Theorem \ref{A}. In Section 4, we give definition of graph products of groups, state the normal form theorem with a relevant proposition and prove Theorem \ref{B}.\\

\textit{Acknowledgements.} I would like to thank my advisor Alexander Furman for his invaluable guidance, support, and encouragement. I would also like to thank the anonymous referee for their helpful and detailed feedback and for pointing out the relation between SMI and randomorphisms.

\section{Definitions, Remarks, and Relation to Randomorphisms}
Throughout the paper; all groups will be discrete countable, all measure spaces will be standard, and all set equalities are almost everywhere (a.e.) equalities (i.e. the symmetric difference between the two sets is null with respect to the ambient measure). We first recall the definition of Measure Equivalence (introduced by Gromov in \cite{Gro}):

\begin{defn}
Two groups $\Lambda$ and $\Gamma$ are called \textbf{Measure Equivalent} (abbreviated as ME, and denoted $\Lambda \ME \Gamma$)  if there exists an infinite measure space $(\Sigma,\mu)$ with a measurable, measure preserving action of $\Lambda \times \Gamma$, so that both actions $\Lambda \acts \Sigma$ and $\Gamma \acts \Sigma$ admit finite-measure fundamental domains $Y \cong \Sigma/\Lambda ,X\cong \Sigma/\Gamma$, i.e.:
\begin{gather*}
    \displaystyle{\Sigma  = \bigsqcup_{\gamma \in \Gamma} \gamma X = \bigsqcup_{\lambda \in \Lambda} \lambda Y}
\end{gather*}

The space $(\Sigma,\mu)$ is called a $(\Lambda,\Gamma)$-\textbf{coupling} or \textbf{ME-coupling}. The \textbf{index} of this coupling is the ratio of the measures of the fundamental domains:
\begin{gather*}
    [\Gamma : \Lambda]_{\Sigma} \defeq \frac{\mu(Y)}{\mu(X)} \in (0,\infty)
\end{gather*}
\end{defn}

\begin{rems}
(See \cite{Furman} for details)
\begin{enumerate}
    \item For a given coupling, the index is well-defined, i.e., it does not depend on the choices of the fundamental domains.
    \item Let $\Lambda \ME \Delta$ and $\Delta \ME \Gamma$ with couplings $\Sigma_1$ and $\Sigma_2$. Then $\Lambda \ME \Gamma$ with the coupling $\Sigma = \Sigma_1 \times_\Delta \Sigma_2 \defeq (\Sigma_1 \times \Sigma_2)/\Delta$ where $\Delta$ acts diagonally on the product. Moreover, the index of this coupling can be computed as:
    $$[\Gamma : \Lambda]_{\Sigma} = [\Gamma : \Delta]_{\Sigma_1} [\Delta : \Lambda]_{\Sigma_2}$$ 
    In particular,  $\ME$ is an equivalence relation on groups.

    \item Any ME-coupling can be decomposed into an integral over a probability space of \textbf{ergodic} ME-couplings, that is, ones for which the $\Lambda \times \Gamma$-action is ergodic.
    \item If $[\Gamma : \Lambda]_{\Sigma} = c \geq 1$, and the coupling is ergodic, then we can choose fundamental domains $X$ and $Y$ (possibly after rescaling the measure $\mu$) so that $X \subset Y$, $\mu(X) = 1$, and $\mu(Y) = c$.
    \item Given a $(\Lambda,\Gamma)$-coupling $\Sigma$ and a choice of fundamental domains $Y \cong \Sigma/\Lambda ,X\cong \Sigma/\Gamma$, we obtain a measurable measure-preserving action $\Lambda \acts X$ where for $\lambda \in \Lambda$ and $x \in X$, $\lambda \cdot x$ is the (essentially) unique element in $\Gamma \lambda x \cap X$, and a cocycle $\alpha : \Lambda \times X \rightarrow \Gamma$ where $\alpha(\lambda,x)$ is the unique element in $\Gamma$ satisfying $\gamma \cdot x = \alpha(\lambda,x)\lambda x$. We similarly have  $\Gamma \acts Y$ and corresponding cocycle $\beta : \Gamma \times Y \rightarrow \Lambda$. (We use $\cdot$ to denote those two actions, to distinguish them from the original action $\Lambda \times \Gamma \acts \Sigma$)
    
\end{enumerate}

\end{rems} 

We give the definition of Measurable Imbedding between groups, which can be thought of as ME but with index in $(0,\infty]$.

\begin{defn}
\label{def:MI}
$\Lambda$ \textbf{Measurably Imbeds} into $\Gamma$ (abbreviated as MI, and denoted $\Lambda \MI \Gamma$) if there exists an infinite measure space $(\Sigma,\mu)$ with a measurable, measure preserving action of $\Lambda \times \Gamma$, so that both actions $\Lambda \acts \Sigma$ and $\Gamma \acts \Sigma$ admit measurable fundamental domains $Y,X \subset \Sigma$ such that $X$ has finite measure.

The space $(\Sigma,\mu)$ is called a $(\Lambda\rightarrowtail\Gamma)$-\textbf{coupling} or \textbf{MI-coupling}. The \textbf{index} of this coupling is the ratio of the measures of the fundamental domains:
\begin{gather*}
    [\Gamma : \Lambda]_{\Sigma} \defeq \frac{\mu(Y)}{\mu(X)} \in (0,\infty]
\end{gather*}
\end{defn}

Here are analogous remarks for MI (the proofs are similar to the ME case):

\begin{rems} $ $
\begin{enumerate}
    \item For a given coupling, the index is well-defined, i.e., it does not depend on the choices of the fundamental domains.
    \item Let $\Lambda \MI \Delta$ and $\Delta \MI \Gamma$ with couplings $\Sigma_1$ and $\Sigma_2$. Then $\Lambda \MI \Gamma$ with the coupling $\Sigma = \Sigma_1 \times_\Delta \Sigma_2$. Same as the ME case, the index of this coupling can be computed as:
    $$[\Gamma : \Lambda]_{\Sigma} = [\Gamma : \Delta]_{\Sigma_1} [\Delta : \Lambda]_{\Sigma_2}$$ 
    In particular,  $\MI$ is an order relation on groups.
    \item Any MI-coupling can be decomposed into an integral over a probability space of \textbf{ergodic} MI-couplings.
    \item We have actions $\Lambda \acts X$ , $\Gamma \acts Y$ and corresponding cocycles $\alpha : \Lambda \times X \rightarrow \Gamma$ , $\beta : \Gamma \times Y \rightarrow \Lambda$ as in the ME case. However, $Y$ might be an infinite measure space in this case, so the cocycle of importance is $\alpha$.
    
\end{enumerate}

\end{rems}

Arranging fundamental domains so that the large group's fundamental domain is contained in the smaller one's plays an important role in our constructions, which is not possible for this general definition if index is less than 1. Indeed, the following example illustrates the theorem does not hold when we replace SMI with MI: consider the trivial coupling of cyclic groups $\mathbb{Z}_2 \MI \mathbb{Z}_2$ with index 1, and another coupling $\mathbb{Z}_3 \MI \mathbb{Z}_2$ with index $\frac{2}{3}$. (As we consider finite groups, these are ME-couplings in disguise.) However, we cannot have $\mathbb{Z}_2 * \mathbb{Z}_3  \MI \mathbb{Z}_2 * \mathbb{Z}_2$, as the latter is amenable while the former is not.

\vspace{1mm}
Even if the coupling index is greater than 1, it is not always possible to arrange fundamental domains as described, as the following example demonstrates:

\begin{ex}
\label{ex:1}
Let $(\Sigma_i,\mu_i)$ be a $(\Lambda\rightarrowtail\Gamma)$-couplings with indices $c_i$ so that $c_1<1$, $c_2>1$. Possibly after rescaling the measures $\mu_i$, we can choose fundamental domains $Y_i \cong \Sigma_i/\Lambda ,X_i\cong \Sigma_i/\Gamma$ for $i=1,2$, so that $\mu_1(Y_1) = c_1$, $\mu_1(X_1) = 1$, $\mu_2(Y_2) = ac_2$, $\mu_2(X_2) = a$, where $a$ is chosen so that the ratio $ \displaystyle{c =\frac{ac_2 + c_1}{a+1}>1}$. Then the coupling $\Sigma = \Sigma_1 \bigsqcup \Sigma_2$ has index $c>1$, but it is not possible to arrange fundamental domains so that the one with the larger measure contains the other. The main reason is the fact that the coupling $\Sigma$ is not ergodic, as $\Sigma_1, \Sigma_2 \in \Sigma$ are invariant sets which are neither null nor conull.
\end{ex}

To exclude such cases, we introduce the following strengthening of MI:

\begin{defn}
$\Lambda$ \textbf{Strictly Measurably Imbeds} into $\Gamma$ (abbreviated as SMI, and denoted $\Lambda \SMI \Gamma$) if there exists an infinite measure space $(\Sigma,\mu)$ with a measurable, measure preserving action of $\Lambda \times \Gamma$, so that both actions $\Lambda \acts \Sigma$ and $\Gamma \acts \Sigma$ admit Borel fundamental domains $Y,X \subset \Sigma$ such that $X$ has finite measure and $X \subset Y$.

The space $(\Sigma,\mu)$ is called a $(\Lambda\rightarrowtail\Gamma)$-\textbf{coupling} or \textbf{SMI-coupling}. The \textbf{index} of this coupling is the ratio of the measures of the fundamental domains:
\begin{gather*}
    [\Gamma : \Lambda]_{\Sigma} \defeq \frac{\mu(Y)}{\mu(X)} \in [1,\infty]
\end{gather*}
\end{defn}

We similarly have well-definedness of the coupling index, transitivity, and cocycles for SMI. Recall that in Example \ref{ex:1}, what prevented the MI-coupling from being an SMI-coupling was the fact that $\Sigma_1,\Sigma_2 \subset \Sigma$ were nontrivial invariant subsets, so the coupling was not ergodic. Indeed in general, if $\Sigma$ is an ergodic MI-coupling with index $c\geq1$, then $\Sigma$ is an SMI-coupling with the same index. 
On the other hand, if there is an MI-coupling with index $c\geq1$, then by ergodic decomposition we can find an ergodic MI-coupling with a possibly different index $\tilde{c}\geq 1$, hence an SMI-coupling. These observation give us the following characterization of SMI in terms of MI:
\begin{prop}
\label{prop:0}
$\Lambda \SMI \Gamma$ if and only if $\Lambda \MI \Gamma$ with index $c\geq1$.
\end{prop}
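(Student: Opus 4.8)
The plan is to prove the two implications separately; the forward one is immediate and the reverse one carries all of the content.

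For the forward direction, suppose $\Lambda \SMI \Gamma$ via a coupling $(\Sigma,\mu)$ with Borel fundamental domains $X \subseteq Y$ and $\mu(X) < \infty$. By definition this is already an MI-coupling, and since $X \subseteq Y$ we get $\mu(X) \le \mu(Y)$, so its index $[\Gamma:\Lambda]_{\Sigma} = \mu(Y)/\mu(X) \ge 1$. Hence $\Lambda \MI \Gamma$ with index $\ge 1$.

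For the reverse direction, start from an MI-coupling $\Sigma$ of index $c \ge 1$, with $\Gamma$-fundamental domain $X$ (of finite measure) and $\Lambda$-fundamental domain $Y$. First I would pass to a good ergodic component. Writing $\Sigma = \int_T \Sigma_t \, d\nu(t)$ for the ergodic decomposition of the $\Lambda \times \Gamma$-action, the sets $X_t = X \cap \Sigma_t$ and $Y_t = Y \cap \Sigma_t$ are fundamental domains for $\Sigma_t$, and $\mu(X) = \int_T \mu_t(X_t)\, d\nu(t)$, $\mu(Y) = \int_T \mu_t(Y_t)\, d\nu(t)$. Thus $c$ is the average of the component indices $c_t = \mu_t(Y_t)/\mu_t(X_t)$ weighted by $\mu_t(X_t)$; since $c \ge 1$ we cannot have $c_t < 1$ for a.e.\ $t$, so $c_t \ge 1$ on a set of components of positive measure. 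Selecting one such component reduces the problem to producing an SMI-coupling out of an \emph{ergodic} MI-coupling of index $\tilde c \ge 1$ (since SMI only asks for the existence of some such coupling, a change of index is harmless here).

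It remains to show that an ergodic MI-coupling $\Sigma$ of index $\ge 1$ is already an SMI-coupling, i.e.\ that the fundamental domains can be re-chosen so that $X \subseteq Y$; this is the analogue of Remark (4) for ME and is where the difficulty lies. I would reduce it to finding a single Borel set $X$ that is a $\Gamma$-fundamental domain and meets each $\Lambda$-orbit at most once: given such an $X$, its $\Lambda$-saturation $\Lambda X$ is met exactly once by $X$, and the $\Lambda$-invariant complement $\Sigma \setminus \Lambda X$ admits a Borel $\Lambda$-transversal $Z$, so $Y = X \sqcup Z$ is a $\Lambda$-fundamental domain with $X \subseteq Y$ and $\mu(X) < \infty$, exactly an SMI-coupling. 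The existence of such an $X$ is a measured matching problem between the $\Gamma$-orbits and the $\Lambda$-orbits: one must select one point per $\Gamma$-orbit so that the selected points are pairwise $\Lambda$-inequivalent. The index inequality $\mu(X) \le \mu(Y)$ plays the role of the global Hall condition, while ergodicity of the $\Lambda \times \Gamma$-action guarantees there is no invariant subset on which it fails; the matching is then produced exactly as in the measure-preserving (ME) case treated in \cite{Furman}, the only new feature being that $Y$ may have infinite measure, which only makes the larger side of the matching easier to saturate. I expect this measured matching step to be the main obstacle; once $X \subseteq Y$ is arranged, the coupling is an SMI-coupling by definition, which completes the argument.
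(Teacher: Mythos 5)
Your proposal is correct and follows essentially the same route as the paper: the forward direction is immediate from $X \subseteq Y$, and the reverse direction passes to an ergodic component of index $\geq 1$ via ergodic decomposition and then re-chooses fundamental domains so that $X \subseteq Y$ in the ergodic case, which is exactly the paper's argument (the MI analogue of Remark (4) for ME-couplings, deferred to \cite{Furman}). Your added detail---the weighted-average argument locating a good ergodic component, and the reduction of the rearrangement step to a measured matching---just makes explicit what the paper leaves implicit.
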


Next, we have the following proposition that gives us the necessary and sufficient properties an SMI-cocycle must have.

\begin{prop}
\label{prop:1}
$\Lambda \SMI \Gamma$ if and only if there exist a probability space $(X, \mu)$  with a measure preserving $\Lambda$-action, and a cocycle $\alpha: \Lambda \times X \rightarrow \Gamma$ satisfying $\alpha(\lambda,x) = e \iff \lambda = e$, for all $x \in X$.
\end{prop}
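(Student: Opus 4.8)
The plan is to prove both implications by passing between an SMI-coupling and the pair consisting of its induced $\Lambda$-action on the finite-measure fundamental domain together with the associated cocycle, exactly as in the remarks following the definitions.

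For the forward direction, suppose $\Lambda \SMI \Gamma$ with coupling $(\Sigma,\mu)$ and Borel fundamental domains $X$ (for $\Gamma$) and $Y$ (for $\Lambda$) with $X \subseteq Y$ and $\mu(X)<\infty$. After rescaling $\mu$ so that $\mu(X)=1$, the set $X$ becomes a probability space carrying the induced p.m.p.\ action $\Lambda \acts X$ and cocycle $\alpha\colon \Lambda\times X\to\Gamma$. It then remains to verify the defining property of $\alpha$. The implication $\lambda=e\Rightarrow\alpha(\lambda,x)=e$ is immediate from uniqueness in the definition of $\alpha$, since $x\in X$ already. For the converse I would argue measure-theoretically: if $\alpha(\lambda,x)=e$ then $\lambda\cdot x=\lambda x$, so the original translate $\lambda x$ lies in $X\subseteq Y$; but $\lambda x$ also lies in $\lambda Y$, hence in $Y\cap\lambda Y$. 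Because $Y$ is a $\Lambda$-fundamental domain, $\mu(Y\cap\lambda Y)=0$ for every $\lambda\neq e$, and since $x\mapsto\lambda x$ is measure-preserving the set $\{x\in X:\alpha(\lambda,x)=e\}$ is null for each $\lambda\neq e$. As $\Lambda$ is countable, discarding the $\Lambda$-invariant saturation of the resulting null set yields a conull subspace on which the property holds for every $x$. Here the containment $X\subseteq Y$, i.e.\ the strictness built into SMI, is exactly what is used.

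For the reverse direction, given $(X,\mu)$, the action $\Lambda\acts X$, and $\alpha$ with $\alpha(\lambda,x)=e\iff\lambda=e$, I would build the coupling by the standard construction $\Sigma=X\times\Gamma$ equipped with $\mu\times m_\Gamma$ (with $m_\Gamma$ the counting measure) and the commuting actions $g\cdot(x,\gamma)=(x,g\gamma)$ and $\lambda\cdot(x,\gamma)=(\lambda x,\gamma\,\alpha(\lambda,x)^{-1})$. A routine check using the cocycle identity shows these are commuting measure-preserving actions, so $\Lambda\times\Gamma\acts\Sigma$; moreover $X\times\{e\}$ is a $\Gamma$-fundamental domain of measure $1$. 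Crucially, the hypothesis forces the $\Lambda$-action to be free, since $\lambda\cdot(x,\gamma)=(x,\gamma)$ forces $\alpha(\lambda,x)=e$, hence $\lambda=e$.

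The main obstacle is producing a $\Lambda$-fundamental domain $Y$ with $X\times\{e\}\subseteq Y$; equivalently, showing the $\Lambda$-action on the infinite-measure space $\Sigma$ is totally dissipative, which is where the hypothesis re-enters decisively. I would first observe that $W\defeq X\times\{e\}$ is a wandering set, since $\lambda W\cap W\neq\emptyset$ again forces $\alpha(\lambda,x)=e$, hence $\lambda=e$. To rule out a conservative part, consider the Hopf decomposition $\Sigma=C\sqcup D$ into invariant conservative and dissipative pieces; if $C$ has positive measure then $C\cap(X\times\{\gamma_0\})$ has positive finite measure for some $\gamma_0\in\Gamma$, and conservativity would force a.e.\ such point to return to $X\times\{\gamma_0\}$ infinitely often under $\Lambda$ — but a return means $\gamma_0\,\alpha(\lambda,x)^{-1}=\gamma_0$, i.e.\ $\alpha(\lambda,x)=e$, i.e.\ $\lambda=e$, a contradiction. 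Hence the action is totally dissipative and admits a fundamental domain; extending the wandering set (take $Y=W\sqcup Y'$ for $Y'$ a fundamental domain of the restricted action on the invariant set $\Sigma\setminus\Lambda W$) gives a $\Lambda$-fundamental domain containing $X\times\{e\}$. With $X\times\{e\}$ as the finite-measure $\Gamma$-fundamental domain contained in $Y$, this exhibits an SMI-coupling. (When $\Gamma$ is infinite $\Sigma$ has infinite measure as required; finite $\Gamma$ forces all groups finite and reduces to the classical finite-index situation.) I expect this dissipativity step — translating ``the cocycle never returns to $e$'' into total dissipativity of the skew product — to be the only genuinely non-formal point.
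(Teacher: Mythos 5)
Your argument is correct, and it builds the same coupling as the paper; the two proofs differ only in how the $\Lambda$-fundamental domain is obtained. For the forward direction, the paper merely says ``one can show,'' and your verification is exactly the intended one: if $\alpha(\lambda,x)=e$ then $\lambda x$ lands in $Y\cap\lambda Y$, which is null for $\lambda\neq e$, and discarding a $\Lambda$-invariant null set handles the a.e.\ issue; this also correctly isolates where the strictness $X\subset Y$ enters. For the reverse direction, your skew product of $\Gamma$ (with counting measure) by $X$ is identical to the paper's $\Omega=\Gamma\times X$, and the divergence is at the key step: the paper proceeds by hand, enumerating $\Gamma=\{\gamma_0=e,\gamma_1,\gamma_2,\dots\}$, setting $Y_0=\{e\}\times X$ and $Y_n=(\{\gamma_n\}\times X)\setminus\Lambda\bullet\left(Y_0\sqcup\cdots\sqcup Y_{n-1}\right)$, and taking $Y=\bigsqcup_n Y_n$; this greedy disjointification works because the cocycle hypothesis makes every slice $\{\gamma\}\times X$ a wandering set. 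You instead invoke the Hopf decomposition, kill the conservative part via the return-to-a-slice contradiction, and then cite the general fact that a totally dissipative action of a countable group admits a measurable fundamental domain, finishing with $Y=W\sqcup Y'$. This is valid, but it is heavier than necessary and in a precise sense contains the paper's argument inside the cited general fact: your recurrence computation shows exactly that each of the countably many slices $X\times\{\gamma\}$ is wandering, and since these slices cover $\Sigma$, total dissipativity is immediate without any Hopf decomposition, while the existence of a fundamental domain for a totally dissipative action is itself proved by the same inductive disjointification the paper performs explicitly. What each approach buys: the paper's is elementary, self-contained, and produces an explicit $Y$ — and this hands-on style of exhibiting concrete fundamental domains is what the index computations in Theorem \ref{thm:1} and Theorem \ref{thm:2} later depend on; yours is shorter modulo standard nonsingular ergodic theory and makes the nice conceptual point that the condition $\alpha(\lambda,x)=e\iff\lambda=e$ is precisely a total-dissipativity statement for the skew product. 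Your closing parenthetical flags a genuine quirk (for finite $\Gamma$ the skew product has finite total measure, while the definition literally asks for an infinite measure space), but the paper itself disregards this degenerate case, as it freely discusses couplings of finite groups, so it does not affect the substance of your proof.
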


\begin{proof}
If we have $\Lambda \SMI \Gamma$, choosing the fundamental domains $X \subset Y$ as in the definition gives the action $\Lambda \acts X$, and the corresponding cocycle $\alpha$. Using the fact that $X\subset Y$, one can show that this cocycle satisfies the property in the statement.

For the other direction, suppose we have such a probability space with a $\Lambda$-action and a $\Lambda$-valued cocycle. Let $(\Omega,\tilde{\mu}) \defeq (\Gamma \times X, c_{\Lambda} \times \mu)$, where $c_{\Lambda}$ is the counting measure. Define the actions of $\Gamma$ and $\Lambda$ on $\Omega$ as:
 \begin{align*} 
     \Gamma \acts \Omega   \qquad \; \gamma \bullet (\bar{\gamma},x) &= (\gamma\,\bar{\gamma},x) \\
     \Lambda \acts \Omega  \qquad \; \lambda \bullet (\bar{\gamma},x) &= (\bar{\gamma}\,\alpha(\lambda,x)^{-1},\lambda \cdot x)
 \end{align*}
These actions are measure preserving, they commute, and $\{e\} \times X$ is a $\Gamma$-fundamental domain. One can check that both actions are free, the induced $\Lambda$-action on X is the action $\cdot$, and the cocycle we get is $\alpha$. Now, we will describe a $\Lambda$-fundamental domain.
Enumerate the group $\Gamma$ as $\Gamma = \{\gamma_0, \gamma_1, \gamma_2, ... \}$, where $\gamma_0 = e$. Let $Y_0 \defeq \{e\} \times X$. Having constructed $Y_0, Y_1, ... , Y_{n-1}$ inductively, define:
$$Y_n = (\{\lambda_n\} \times X) \setminus \Lambda \bullet \left( \bigsqcup_{i=0}^{n-1} Y_i \right)$$

Finally, set $\displaystyle{Y \defeq \bigsqcup_{i = 0}^\infty Y_i}$. Using the construction and the hypothesis on the cocycle $\alpha$, one can check that $Y$ is a $\Lambda$-fundamental domain. It contains the $\Gamma$-fundamental domain $\{e\} \times X$. Hence, $\Omega$ is the desired $(\Lambda\rightarrowtail\Gamma)$-coupling.
\end{proof}

\begin{rem}
    The condition on the cocycle $\alpha$ is equivalent to the following statement: For all $x \in X$, $\alpha(e,x) = e$ and the functions $\alpha(\, \cdot \, ,x):\Lambda \rightarrow \Gamma$ are injective. So, SMI-cocycles are characterized by this injectivity property.
\end{rem}

Remaining of the section follows \cite{Mon} for the definitions.
Given two countable groups  $\Lambda$ and $\Gamma$, one can define the Polish space
$$ \textbf{[}\Lambda , \Gamma\textbf{]} \defeq \{f:\Lambda\rightarrow\Gamma \,| \,f(e)=e\} $$
There is a natural $\Lambda$-action on $\textbf{[}\Lambda , \Gamma\textbf{]}$ given by
$$ (\lambda\cdot f)(x) \defeq f(x\lambda)f(\lambda)^{-1} \text{  for  } f \in \textbf{[}\Lambda , \Gamma\textbf{]}, \; \lambda,x \in \Lambda.$$

\begin{defn} (\textbf{Definitions 5.1 \& 5.2} in \cite{Mon}) A \textbf{randomorphism} from $\Lambda$ to $\Gamma$ is a $\Lambda$-invariant probability measure on $ \textbf{[}\Lambda , \Gamma\textbf{]}$. A randomorphism is a \textbf{randembedding} if it is supported on the injective maps. Say that $\Lambda$ is a \textbf{random subgroup} of $\Gamma$ if it admits a randembedding into $\Gamma$.
\end{defn}

We observe that being a random subgroup is equivalent to SMI.

\begin{prop}
\label{prop:3}
    $\Lambda \SMI \Gamma$ if and only if $\Lambda$ is a random subgroup of $\Gamma$.
\end{prop}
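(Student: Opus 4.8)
The plan is to route everything through the cocycle characterization of SMI in Proposition \ref{prop:1} (together with the injectivity reformulation in the remark following it), and to exhibit an explicit dictionary between SMI-cocycles and $\Lambda$-invariant measures on $\textbf{[}\Lambda,\Gamma\textbf{]}$. The crucial observation is that the $\Lambda$-action $(\lambda\cdot f)(x)=f(x\lambda)f(\lambda)^{-1}$ on $\textbf{[}\Lambda,\Gamma\textbf{]}$ is precisely engineered so that the evaluation map $\mathrm{ev}(\lambda,f)\defeq f(\lambda)$ is a cocycle for it; randembeddings are then nothing but SMI-cocycles in disguise.

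For the forward direction, given $\Lambda \SMI \Gamma$, Proposition \ref{prop:1} furnishes a probability space $(X,\mu)$ with a measure-preserving $\Lambda$-action and a cocycle $\alpha:\Lambda\times X\to\Gamma$ for which, by the remark, $\alpha(e,x)=e$ and $\lambda\mapsto\alpha(\lambda,x)$ is injective for every $x$. I would define $\Phi:X\to\textbf{[}\Lambda,\Gamma\textbf{]}$ by $\Phi(x)(\lambda)\defeq\alpha(\lambda,x)$; this lands in $\textbf{[}\Lambda,\Gamma\textbf{]}$ because $\alpha(e,x)=e$, and it is Borel since each coordinate $x\mapsto\alpha(\lambda,x)$ is measurable. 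The cocycle identity $\alpha(\mu\lambda,x)=\alpha(\mu,\lambda\cdot x)\alpha(\lambda,x)$ then gives, for each $\mu\in\Lambda$,
\[
(\lambda\cdot\Phi(x))(\mu)=\Phi(x)(\mu\lambda)\,\Phi(x)(\lambda)^{-1}=\alpha(\mu\lambda,x)\alpha(\lambda,x)^{-1}=\alpha(\mu,\lambda\cdot x)=\Phi(\lambda\cdot x)(\mu),
\]
so $\Phi$ is $\Lambda$-equivariant. Pushing forward, $\nu\defeq\Phi_*\mu$ is a $\Lambda$-invariant probability measure on $\textbf{[}\Lambda,\Gamma\textbf{]}$, and since each $\Phi(x)$ is injective, $\nu$ is supported on injective maps. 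Hence $\nu$ is a randembedding and $\Lambda$ is a random subgroup of $\Gamma$.

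For the reverse direction, given a randembedding, i.e.\ a $\Lambda$-invariant probability measure $\nu$ on $\textbf{[}\Lambda,\Gamma\textbf{]}$ concentrated on injective maps, I would take $(X,\mu)\defeq(\textbf{[}\Lambda,\Gamma\textbf{]},\nu)$ with the given $\Lambda$-action and set $\alpha(\lambda,f)\defeq f(\lambda)$. A direct check using $(\lambda_2\cdot f)(\lambda_1)=f(\lambda_1\lambda_2)f(\lambda_2)^{-1}$ shows $\alpha(\lambda_1\lambda_2,f)=\alpha(\lambda_1,\lambda_2\cdot f)\alpha(\lambda_2,f)$, so $\alpha$ is a cocycle; moreover $\alpha(e,f)=f(e)=e$ and $\lambda\mapsto\alpha(\lambda,f)=f(\lambda)$ is injective for $\nu$-a.e.\ $f$. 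By Proposition \ref{prop:1} this yields $\Lambda\SMI\Gamma$.

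The argument is essentially formal once Proposition \ref{prop:1} is in hand; the only point requiring care is the bookkeeping of the cocycle identity against the definition of the $\Lambda$-action on $\textbf{[}\Lambda,\Gamma\textbf{]}$, so that equivariance and the cocycle relation line up correctly in both directions, together with the routine verification that $\Phi$ is Borel and that injectivity of the cocycle fibres corresponds exactly to support on injective maps. I do not expect a substantive obstacle beyond these consistency checks.
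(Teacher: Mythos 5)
Your proposal is correct and takes essentially the same route as the paper: both directions are funneled through Proposition \ref{prop:1}, with the forward direction pushing forward $\mu$ under the equivariant map $x \mapsto \alpha(\,\cdot\,,x)$ and the reverse direction built on the evaluation cocycle $\alpha(\lambda,f)=f(\lambda)$. The only difference is cosmetic: where you verify the cocycle identity for $\alpha(\lambda,f)=f(\lambda)$ by direct computation, the paper derives it by realizing the coupling concretely on the space of injective functions $f:\Lambda\rightarrow\Gamma$ with commuting free $\Lambda$- and $\Gamma$-actions, before likewise invoking Proposition \ref{prop:1}.
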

\begin{proof}
The forward direction follows from the discussion at Section 5.2 in \cite{Mon}. Indeed, arranging the fundamental domains as $X \subset Y$, the SMI-cocycle $\alpha : \Lambda \times X \rightarrow \Gamma$ yields a measurable map $\hat{\alpha} : X \rightarrow \textbf{[}\Lambda , \Gamma\textbf{]}$ given by $\hat{\alpha}(x)(\lambda) = \alpha(\lambda,x)$. This map is $\Lambda$-equivariant, and one can check the condition $X \subset Y$ ensures that $\hat{\alpha}$ lands into injective maps. Therefore, letting $\mu$ be the probability measure on X, the measure $\hat{\alpha}_*\mu$ is a randembedding. 

\vspace{1mm}

For the other direction, let $X \defeq \{f:\Lambda\rightarrow\Gamma \,| \,f(e)=e, \, f \text{ is injective}\}$ equipped with the randembedding $\mu$, which is invariant under the action $\Lambda \acts X$, $(\lambda\cdot f)(x) \defeq f(x\lambda)f(\lambda)^{-1}$. Consider the space \hbox{$\Omega \defeq \{f:\Lambda\rightarrow\Gamma \,| \,f \text{ is injective}\}$} with the $\Gamma$-action by translations on the image, i.e. $(\gamma f)(x) \defeq \gamma (f(x))$. Note $\Omega = \bigsqcup_{\gamma \in \Gamma} \gamma X$, so equipping $\Omega$ with the natural measure coming from this partition makes the $\Gamma$-action measure preserving, and X becomes a fundamental domain for $\Gamma$. Let $\Lambda \acts \Omega$ by $(\lambda f)(x) \defeq f(\lambda)f(x\lambda)f(\lambda)^{-1}$. $\Omega$ consisting of injective functions implies that this $\Lambda$-action is free, and this action commutes with the $\Gamma$ action. Moreover, the induced $\Lambda$-action on X coming from these free commuting actions is the original $\Lambda$-action on X. Hence, we will get a cocycle $\alpha : \Lambda \times X \rightarrow \Gamma$ such that $\gamma \cdot f = \alpha(\lambda,f)\lambda f$. This formula shows that $\alpha(\lambda,f) = f(\lambda)$. By the definition of the space $X$, this cocycle satisfies the hypotheses of Proposition \ref{prop:1}, which gives the result.

 \end{proof}


\section{Free Products}

A special case of ME is Orbit Equivalence (OE), which corresponds to having a coupling with index 1. If additionally the actions are ergodic, one can choose the same set as the fundamental domain for both of the actions. (See \cite{Furman} for the definition and proof of this correspondence.) Gaboriau's result mentioned in the introduction is:

\begin{thm} ($\mathbf{P_{ME}6}$  in \cite{Gab})
If $\Lambda_1 \OrE \Gamma_1$ and $\Lambda_2 \OrE \Gamma_2$, then $\Lambda_1 * \Lambda_2 \OrE \Gamma_1 * \Gamma_2$
\end{thm}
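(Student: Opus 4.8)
The plan is to prove Gaboriau's theorem inside the cocycle formalism already set up: I would manufacture a single orbit-equivalence cocycle for the free products by concatenating the two given ones along reduced words, and then feed it into Proposition~\ref{prop:1}.

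First I would repackage the hypotheses. Since $\Lambda_i \OrE \Gamma_i$ is an ergodic index-$1$ coupling with a common fundamental domain, I obtain a probability space $(X_i,\mu_i)$ carrying free ergodic measure-preserving actions of both $\Lambda_i$ and $\Gamma_i$ with the same orbits, together with the cocycle $\alpha_i\colon\Lambda_i\times X_i\to\Gamma_i$ defined by $\lambda\cdot x=\alpha_i(\lambda,x)\cdot x$. The feature coming specifically from index $1$ (rather than general SMI) is that for a.e.\ $x$ the map $\alpha_i(\,\cdot\,,x)\colon\Lambda_i\to\Gamma_i$ is a \emph{bijection}, not merely an injection; surjectivity is exactly the statement that $X_i$ is simultaneously a fundamental domain for $\Gamma_i$.

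Next I would assemble a base space for $\Lambda\defeq\Lambda_1*\Lambda_2$ carrying enough independent copies of $X_1$ and $X_2$ to run a reduced-word recursion. The natural candidate is the product of co-induced actions $X\defeq\mathrm{CoInd}_{\Lambda_1}^{\Lambda}(X_1)\times\mathrm{CoInd}_{\Lambda_2}^{\Lambda}(X_2)$, whose points assign to each vertex of the Bass--Serre tree of $\Lambda$ a point of the corresponding $X_i$; a short case analysis over the three kinds of nontrivial elements of a free product (conjugate into $\Lambda_1$, conjugate into $\Lambda_2$, or fixing no vertex) shows this $\Lambda$-action is essentially free and measure preserving on the probability space $X$. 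On $X$ I would define $\alpha\colon\Lambda\times X\to\Gamma\defeq\Gamma_1*\Gamma_2$ on a reduced word $\lambda=s_n\cdots s_1$, with $s_j\in\Lambda_{i_j}\setminus\{e\}$ and $i_j\neq i_{j+1}$, by walking along the tree: read the $X_{i_1}$-coordinate at the base vertex and apply $\alpha_{i_1}(s_1,\cdot)$, pass to the adjacent vertex determined by $s_1$ and apply $\alpha_{i_2}(s_2,\cdot)$ there, and so on, setting $\alpha(\lambda,x)$ to be the concatenation $\alpha_{i_n}(s_n,\cdot)\cdots\alpha_{i_1}(s_1,\cdot)$. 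Because each factor cocycle is bijective, every syllable $\alpha_{i_j}(s_j,\cdot)$ is a nontrivial element of $\Gamma_{i_j}$, so the output is already a reduced word of the same syllable pattern; hence it is nontrivial whenever $\lambda\neq e$, and $\lambda\mapsto\alpha(\lambda,x)$ is itself a bijection $\Lambda\to\Gamma$ for a.e.\ $x$. Proposition~\ref{prop:1} then gives $\Lambda\SMI\Gamma$, and bijectivity upgrades the index to $1$: in the coupling $\Omega=\Gamma\times X$ of that proposition the set $\{e\}\times X$ is simultaneously a $\Gamma$- and a $\Lambda$-fundamental domain (for a.e.\ $x$ there is a unique $\lambda$ with $\alpha(\lambda,x)=\gamma$), so $\Omega$ has index $1$ with a shared fundamental domain, which is precisely $\Lambda\OrE\Gamma$. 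Ergodicity, if desired, can be extracted from ergodic decomposition as in the remarks of Section~2.

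I expect the main obstacle to be verifying that $\alpha$ is an honest cocycle, $\alpha(\lambda\lambda',x)=\alpha(\lambda,\lambda'x)\alpha(\lambda',x)$. This is where the bookkeeping lives: one must check that the vertex visited after applying $\lambda'$, and the coordinate read there, agree with the coordinate that the co-induced shift places at the base vertex of $\lambda'x$, and that any cancellation occurring when the reduced form of $\lambda\lambda'$ is shorter than $n+m$ is matched by exactly the corresponding cancellation in $\Gamma$. The latter again uses that each $\alpha_i(\,\cdot\,,y)$ is a bijection, so two syllables cancel in $\Lambda$ if and only if their images cancel in $\Gamma$; this is the precise ``reduced maps to reduced'' statement that rules out any measure-theoretic collapse and thereby secures the index being exactly $1$. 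Establishing this consistency, together with measurability of the recursion, is the technical heart of the argument, and the essential freeness of the $X_i$-actions and the bijectivity of the $\alpha_i$ are exactly the inputs that make it go through.
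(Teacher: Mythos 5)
Your proposal is correct in outline, but it takes a genuinely different route from the paper. The paper never reproves Gaboriau's theorem directly: it cites \cite{Gab}, and its own machinery recovers the statement as the index-$1$ case of Theorem~\ref{A}, which is proved \emph{one free factor at a time}. In Theorem~\ref{thm:1} the extra factor $G$ is made to act \emph{trivially} on the base space $X$, the cocycle is extended by $\tilde{\alpha}(g,x)=g$, the coupling is $(G*\Gamma)\times X$, and an explicit $G*\Lambda$-fundamental domain $(\{e\}\times X)\sqcup(W\times(Y\setminus X))$ is exhibited; the two-factor statement then follows by composing two such couplings via transitivity of SMI. You instead handle both factors simultaneously: a free base action is manufactured by co-induction, the two cocycles are concatenated along reduced words, and the result is fed into Proposition~\ref{prop:1}. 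Each approach buys something. The paper's trivial-action trick needs no co-induction and no freeness of the base action (the construction in Proposition~\ref{prop:1} never uses it: freeness of the $\Lambda$-action on $\Gamma\times X$ comes from the cocycle condition alone), and the explicit fundamental domain is what permits the index computation ($1$ versus $\infty$) for arbitrary SMI indices, which is the paper's real goal; your construction, as written, lives only in the bijective (index-$1$) setting. Conversely, in that setting yours delivers more: since the co-induced product action is essentially free, the two induced actions on the common fundamental domain $\{e\}\times X$ are honest free actions with identical orbits, i.e.\ orbit equivalence in the strict sense, whereas in the paper's coupling the subgroup $G$ acts trivially on both quotient actions, so strict OE must be recovered by the standard repair of multiplying the coupling by an essentially free probability-measure-preserving action of the product group. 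Three remarks on your flagged difficulties: the cocycle identity is lighter than you fear, since a cocycle defined on each factor $\Lambda_i\times X\to\Gamma_i$ extends uniquely to the free product by induction on word length, cancellation being absorbed by $\alpha_i(s^{-1},sy)\,\alpha_i(s,y)=e$; what genuinely uses bijectivity of the $\alpha_i$ is injectivity of $\alpha(\cdot\,,x)$ (reduced words map to reduced words) together with the syllable-by-syllable solving that gives surjectivity, which you assert but should write out since it is exactly where the index becomes $1$; and essential freeness of the co-induced action needs the small standard argument that elements not conjugate into a factor have all vertex-orbits infinite, plus the observation that graphs of measure-preserving maps of $X_i$ have measure at most $1/2$ (the $X_i$ may be atomic when the $\Lambda_i$ are finite).
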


 There, he actually proves the same result for infinite free products, from which the above result follows. He also gives counterexamples to show that the conclusion does not hold if we replace OE with ME. Theorem \ref{A} shows that the result is true if we replace OE with SMI. Theorem \ref{A} follows from the following theorem using transitivity of SMI:
 
 \begin{thm}
 \label{thm:1}
 For groups $\Lambda$, $\Gamma$, $G$; if $\Lambda \SMI \Gamma$ with coupling $\Sigma$ and $G$ is non-trivial, then $G * \Lambda \SMI G * \Gamma$ with coupling index $\infty$, unless $[\Gamma:\Lambda]_{\Sigma} = 1$, in which case the new coupling index is also 1.
 \end{thm}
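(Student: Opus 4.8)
The plan is to construct an explicit $(G*\Lambda \rightarrowtail G*\Gamma)$-coupling directly, using the cocycle characterization from Proposition~\ref{prop:1} as the most convenient bookkeeping device. By that proposition, the hypothesis $\Lambda \SMI \Gamma$ gives a probability space $(X,\mu)$ with a measure-preserving $\Lambda$-action and a cocycle $\alpha:\Lambda\times X\to\Gamma$ with the injectivity property $\alpha(\lambda,x)=e\iff\lambda=e$. The goal is to build a probability space $(\widetilde X,\widetilde\mu)$ with a measure-preserving $(G*\Lambda)$-action and a cocycle $\widetilde\alpha:(G*\Lambda)\times\widetilde X\to G*\Gamma$ satisfying the same injectivity condition, and then invoke Proposition~\ref{prop:1} in reverse to conclude $G*\Lambda\SMI G*\Gamma$.

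\vspace{1mm}

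The key construction is to let $G$ act by its own Bernoulli-type shift while $\Lambda$ carries the cocycle $\alpha$, arranged along the tree-like structure of the free product. First I would take the base space to be (a suitable version of) $\widetilde X = \big(G * \Gamma\big) \times_{?} X$ type object; more concretely, I expect to model $\widetilde X$ on the reduced words of $G*\Lambda$, attaching an independent copy of $X$ at each ``$\Lambda$-syllable'' and letting $G$ act freely on its own coordinates. The cocycle $\widetilde\alpha$ should be defined syllable-by-syllable: a reduced word $w=g_1\lambda_1 g_2\lambda_2\cdots$ in $G*\Lambda$ is sent to the word $g_1\,\alpha(\lambda_1,x_1)\,g_2\,\alpha(\lambda_2,x_2)\cdots$ in $G*\Gamma$, where the $x_i$ are the appropriate points of the attached $X$-copies as governed by the $\Lambda$-action. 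The cocycle identity for $\widetilde\alpha$ should follow formally from the cocycle identity for $\alpha$ together with the normal-form (reduced word) uniqueness in free products, and the required injectivity $\widetilde\alpha(w,\cdot)=e\iff w=e$ should reduce to: the image word is reduced and nontrivial whenever $w$ is, which is exactly where the hypothesis $\alpha(\lambda,x)=e\iff\lambda=e$ is needed, since it guarantees no $\Lambda$-syllable collapses to the identity (and hence no unwanted cancellation merges adjacent $G$-syllables).

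\vspace{1mm}

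The main obstacle I anticipate is twofold. First, making the measure-theoretic construction of $\widetilde X$ precise: one must choose a genuinely $G*\Lambda$-invariant probability measure on a space indexed by reduced words of unbounded length, and verify measurability and measure-preservation of the shift-and-cocycle action; the cleanest route is probably to realize $\widetilde X$ as an infinite product over the syllable positions (a Bernoulli-like space for the $G$-coordinates times copies of $X$) rather than as an ad hoc inductive gluing. Second, and more delicate, is the index computation. When $[\Gamma:\Lambda]_\Sigma=1$ the construction should preserve index $1$ (essentially because one copy of $X$ already serves both groups, as in the OE case), whereas when the index exceeds $1$ at some syllable, the ``extra room'' compounds multiplicatively across infinitely many syllables, forcing the $G*\Lambda$-fundamental domain to have infinite measure; I would verify this by computing $\widetilde\mu(\widetilde Y)$ as a sum over word-length of terms each scaled by a factor $\ge c>1$, which diverges. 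Pinning down the fundamental domains $\widetilde X\subset\widetilde Y$ explicitly—probably again by an inductive exhaustion over an enumeration of $G*\Gamma$ as in the proof of Proposition~\ref{prop:1}—and checking the containment $\widetilde X\subset\widetilde Y$ is where the bulk of the technical care will go.
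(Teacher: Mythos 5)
Your overall strategy for the qualitative statement $G*\Lambda \SMI G*\Gamma$ is the same as the paper's: invoke Proposition \ref{prop:1} to trade the coupling for a cocycle $\alpha$ with the injectivity property, extend it syllable-by-syllable to a cocycle $\tilde{\alpha}:(G*\Lambda)\times\cdot\rightarrow G*\Gamma$, and get injectivity of $\tilde{\alpha}$ from the fact that no $\Lambda$-syllable collapses, so the image word stays reduced. But your base space is over-engineered, and the complication is self-inflicted. You do not need independent copies of $X$ along the syllables, nor a Bernoulli-type $G$-action: the paper simply keeps the \emph{same} probability space $X$ and lets $G$ act \emph{trivially} on it. Since $G*\Lambda$ is a free product, any actions of the two factors on $X$ generate an action of $G*\Lambda$, and a cocycle is freely determined by its values on the factors (set $\tilde{\alpha}(g,x)=g$ for $g\in G$); there are no relations to check. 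With this choice your anticipated ``main obstacle'' (building an invariant measure on a space indexed by reduced words of unbounded length) evaporates, and your instinct that $G$ must act freely on the base is mistaken: freeness of the actions on the coupling $\widetilde{\Sigma}=(G*\Gamma)\times X$ comes from the group coordinate, exactly as in the proof of Proposition \ref{prop:1}, not from the base action.

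The genuine gap is the index computation, which is the actual quantitative content of Theorem \ref{thm:1} and occupies most of the paper's proof. Your plan --- produce a fundamental domain by the inductive exhaustion of Proposition \ref{prop:1} and argue that ``extra room compounds multiplicatively across syllables'' --- is not a proof, and the multiplicative picture misdescribes the mechanism: the inductive exhaustion gives a fundamental domain whose measure is opaque, and nothing compounds. What the paper does instead is write down an explicit $G*\Lambda$-fundamental domain: identifying $\widetilde{\Sigma}\cong W\times\Gamma\times X\cong W\times\Sigma$, where $W\cong (G*\Gamma)/\Gamma$ is the set of reduced words ending in a nontrivial element of $G$ (together with $e$), one takes
\begin{equation*}
\widetilde{Y} \;=\; \bigl(\{e\}\times X\bigr)\;\sqcup\;\bigl(W\times(Y\setminus X)\bigr),
\end{equation*}
and the measure is \emph{additive} over the cosets: $\tilde{\mu}(\widetilde{Y}) = 1 + |W|\,\mu(Y\setminus X)$, which is $1$ when $[\Gamma:\Lambda]_\Sigma=1$ (since then $Y=X$) and $\infty$ otherwise (since $W$ is infinite once $G$ is nontrivial and $\mu(Y\setminus X)>0$). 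Verifying that this explicit set really is a fundamental domain --- that it meets every orbit, and that no nontrivial $\lambda z\in G*\Lambda$ maps it into itself, which requires the case analysis on whether the $\Lambda$-part and the $G$-leading part of the word are trivial, and uses $X\subset Y$ to rule out cancellations --- is where the real work lies, and your proposal contains no substitute for it.
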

 
 \begin{proof}
 
 Fix a $(\Lambda,\Gamma)$-coupling $(\Sigma,\mu)$, fundamental domains $Y \cong \Sigma/\Lambda ,X\cong \Sigma/\Gamma$ such that $X \subset Y$ and $\mu(X) = 1$. Then we have the action $\Lambda \acts X$ and the corresponding cocycle $\alpha: \Lambda \times X \rightarrow \Gamma$ so that $\lambda \cdot x = \alpha(\lambda,x)\lambda x$. \hfill
 
 Let $G$ act on $X$ trivially to get an action of $G*\Lambda$ on $X$, still denoted by $\cdot$. Extend the cocycle $\alpha$ to $\tilde{\alpha}: (G*\Lambda)\times X \rightarrow G*\Gamma$ by setting $\tilde{\alpha}(g,x) = g$, for $g \in G$, and using the cocycle identity \linebreak $\tilde{\alpha}(gh,x) = \tilde{\alpha}(g,h \cdot x) \tilde{\alpha}(h,x)$. \hfill
 
 Set $\widetilde{\Sigma} = (G*\Gamma)\times X$, with the induced product measure $\tilde{\mu}$, where we take the counting measure on $G*\Gamma$, and $\mu|_X$ on $X$.  We define the following actions:
 \begin{align*} 
     G*\Gamma \acts \widetilde{\Sigma}   \qquad \; w \cdot (\bar{w},x) &= (w\,\bar{w},x) \\
     G*\Lambda \acts \widetilde{\Sigma}  \qquad \; w \cdot (\bar{w},x) &= (\bar{w}\,\tilde{\alpha}(w,x)^{-1},w \cdot x)
 \end{align*}

This is the same construction from the proof of Proposition \ref{prop:1}, and it is easy to check that the cocycle $\tilde{\alpha}$ satisfies the hypothesis of that Proposition. So we can apply Proposition \ref{prop:1} and already conclude that $\widetilde{\Sigma}$ is a $(G*\Lambda_i\rightarrowtail G*\Gamma_i)$-coupling. In order to calculate the coupling index, we will give an explicit description of a $G*\Lambda$-fundamental domain.
 
 We will look at $\widetilde{\Sigma}$ from another perspective, which will make the description of this fundamental domain easier. The idea is to see $\widetilde{\Sigma}$ as a collection of disjoint copies of the original coupling $\Sigma$, using the natural identification $\Sigma \cong \Gamma \times X$. Letting $\Gamma \acts G*\Gamma$ by right multiplication, we have $G*\Gamma \cong G*\Gamma/\Gamma \times \Gamma$. By seeing elements of $G*\Gamma$ as reduced words, and choosing the representatives of minimal word length from $G*\Gamma/\Gamma$ gives the identification $G*\Gamma/\Gamma \cong W$, where:
$$ W \defeq \{w \in G*\Gamma | \, w  \text{ is a reduced word ending with a non-trivial element of } G \} \cup \{e\} $$ \hfill

This gives us the identification $\widetilde{\Sigma} \cong W \times \Gamma \times X$. The action of $G*\Lambda$ making this identification $G*\Lambda$-equivariant is:
\begin{align*}
    g \bullet (w, \gamma, x) &= (w\gamma g^{-1}, e, x) \quad &\text{for}\, g \in G \\
    \lambda \bullet (w, \gamma, x) &= (w, \gamma \alpha(\lambda,x)^{-1}, \lambda \cdot x) \quad &\text{for}\, \lambda \in \Lambda
\end{align*}

If $z\in G*\Lambda$ is a reduced word starting with a nontrivial element of $G$, a straightforward calculation shows:
$$z \bullet (w, \gamma, x) = (w\gamma \tilde{\alpha}(z,x)^{-1},e,z\cdot x)$$

Identifying $\Gamma \times X \cong \Sigma$, we claim $\widetilde{Y} \defeq ( \{e\} \times X) \bigsqcup (W \times (Y\setminus X))$ is a fundamental domain for $G*\Lambda$. We first show that $G*\Lambda \bullet \widetilde{Y} = W \times \Gamma \times X$, so $\widetilde{Y}$ intersects every orbit of $G*\Lambda$. Indeed, the identification $W \times \Gamma \times X \cong W \times \Sigma$ is $G*\Gamma$-equivariant, and $W \times X \subset (G*\Lambda)\bullet(\{e\} \times X)$. This gives $W \times Y \subset (G*\Lambda)\bullet\widetilde{Y}$. As $Y$ is a $\Lambda$ fundamental domain in $\Sigma$, $\Lambda \bullet (\{w\} \times Y) = \{w\} \times \Sigma$. Hence, $W \times \Gamma \times X \cong W \times \Sigma \subset \Lambda \bullet (W \times Y) \subset (G*\Lambda) \bullet \widetilde{Y}$.

\vspace{2mm}

It remains to show for any nontrivial element $w \in G*\Lambda$ and $y \in \widetilde{Y}$, $w \cdot y \notin \widetilde{Y}$.
An arbitrary element of $G*\Lambda$ can be written of the form $\lambda z$, where $\lambda \in \Lambda$ and $z \in G*\Lambda$ is a reduced word starting with an element of G. Also note that $(w,\gamma,x) \in \widetilde{Y}$ if and only if $w=\gamma=e$ corresponding to $\{e\} \times X$, or $\gamma x \in Y$ as an element in $\Sigma$. We also recall the following observation coming from the choice of the fundamental domains so that $X \subset Y$:
\begin{equation}
    \alpha(\lambda,x) = e \quad \iff \quad \lambda = e
    \label{eq:1}
\end{equation} \hfill

Letting $p:G*\Gamma \rightarrow \Gamma$ be the retraction homomorphism onto $\Gamma$, note that for $w \in G*\Gamma$ and $x \in X$, $w \cdot x = p(w) \cdot x$, as $G$ acts trivially on $X$. So, we have the following equality:
\begin{equation}
    w \cdot x = \alpha(p(w),x)p(w)x
    \label{eq:2}
\end{equation}

We have three cases depending on whether $\lambda$ or $z$ are trivial:

\begin{case}
Both $\lambda$ and $z$ are non-trivial.

Let $(w,\gamma,x) \in \widetilde{Y}$. Then,
    \begin{equation*}
        \lambda z \bullet (w,\gamma,x) = \lambda \bullet (w \gamma \tilde{\alpha}(z,x)^{-1}, e, z \cdot x)= (w \gamma \tilde{\alpha}(z,x)^{-1}, \alpha(\lambda, z \cdot x)^{-1}, \lambda z \cdot x) \notin \widetilde{Y}
    \end{equation*}
    as  $\tilde{\alpha}(\lambda, z \cdot x) \neq e$ by (\ref{eq:1}) and:
    \begin{align*}
        \alpha(\lambda, z \cdot x)^{-1} (\lambda z \cdot x) &= \alpha(\lambda, p(z) \cdot x)^{-1} \alpha(\lambda p(z),x)\lambda p(z) x \quad & \text{by }(\ref{eq:2}) \\
        &= \alpha(p(z),x)\lambda p(z)x & \text{by cocycle identity} \\
        &= \lambda (p(z) \cdot x) & \text{by }(\ref{eq:2}) 
    \end{align*}

    $\lambda (p(z) \cdot x) \notin \widetilde{Y}$ as $p(z) \cdot x \in X \subset Y$ and $\lambda \neq e$. 
\end{case}

\begin{case}
$\lambda = e$ and $z$ is non-trivial.
    \begin{equation*}
        z \bullet (e,e,x) = (\tilde{\alpha}(z,x)^{-1}, e, z \cdot x) \notin \widetilde{Y}
    \end{equation*}
    as $\alpha(z,x) \neq e$ by (\ref{eq:1}) and definition of $\tilde{\alpha}$. \hfill
    
    For $\gamma \neq e$ and $\gamma x \in Y$:
   \begin{equation*}
        z \bullet (w,\gamma,x) = (w \gamma \tilde{\alpha}(z,x)^{-1}, e, z \cdot x) \notin \widetilde{Y}
    \end{equation*}
    as $w \gamma \tilde{\alpha}(z,x)^{-1} \neq e$. Indeed, $\gamma x \in Y$ implies $\alpha(\lambda,x) \neq \gamma$ for all $\lambda \in \Lambda$ and for a.e. $x \in X$, so $\gamma$ inside the word $w \gamma \tilde{\alpha}(z,x)^{-1}$ will not be cancelled.
\end{case}

\begin{case}
$z = e$ and $\lambda$ is non-trivial.
    \begin{equation*}
        \lambda \bullet (e,e,x) = (e, \alpha(\lambda, x)^{-1}, \lambda \cdot x) \notin \widetilde{Y}
    \end{equation*}
    as $\alpha(\lambda,x) \neq e$ by (\ref{eq:1}), and 
    \begin{equation*}
        \alpha(\lambda, x)^{-1} (\lambda \cdot x) = \lambda x \notin Y
    \end{equation*}
        
    For $\gamma \neq e$ and $\gamma x \in Y$:
   \begin{equation*}
        \lambda \bullet (w,\gamma,x) = (w, \gamma \alpha(\lambda,x)^{-1}, \lambda \cdot x) \notin \widetilde{Y}
    \end{equation*}
    as $\gamma \alpha(\lambda,x)^{-1} \neq e$ as in Case 2, and
    \begin{equation*}
        \gamma \alpha(\lambda,x)^{-1} (\lambda \cdot x) = \gamma \lambda x = \lambda (\gamma x) \notin Y
    \end{equation*}
    as $\gamma x \in Y$ and $\lambda \neq e$.
\end{case}

This shows $\widetilde{Y}$ is a $G * \Lambda$ fundamental domain. Note if $[\Gamma:\Lambda]_{\Sigma} = 1$, then $Y=X$, so $\tilde{\mu}(\widetilde{Y}) = \tilde{\mu}(\{e\} \times X) = \mu(X) = 1$. Otherwise, $[\Gamma:\Lambda]_{\Sigma} > 1$, so $\mu(Y\setminus X) > 0$. Hence, we have
$\tilde{\mu}(\widetilde{Y}) = \tilde{\mu}(\{e\} \times X) + \tilde{\mu}(W \times (Y\setminus X)) = 1 + |W|\mu(Y \setminus X) = \infty$, as $W$ is infinite as long as $G$ is a non-trivial group. This proves the statement about the coupling indices.
\end{proof}

\begin{proof}[Proof of Theorem \ref{A}]
By Theorem \ref{thm:1}, we get $\Lambda_1 * \Lambda_2 \SMI \Gamma_1 * \Lambda_2$ with index $d_1$, and $\Gamma_1 * \Lambda_2 \SMI \Gamma_1 * \Gamma_2$ with index $d_2$. Composing these couplings gives $\Lambda_1 * \Lambda_2 \SMI \Gamma_1 * \Gamma_2$ with index $d_1 d_2$. If $c_1 = c_2 = 1$, then $d_1 = d_2 =  1$, so $d_1d_2 = 1$. Otherwise, at least one $d_i$ is $\infty$, so $d_1 d_2 = \infty$.
\end{proof}

The following is immediate using Proposition \ref{prop:3}.

\begin{cor}
If $\Lambda_i$ is a random subgroup of $\Gamma_i$ for $i=1,2$, then $\Lambda_1 * \Lambda_2$ is a random subgroup of $\Gamma_1*\Gamma_2$.
\end{cor}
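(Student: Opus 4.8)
The plan is to reduce the statement to Theorem \ref{A} by translating freely between the language of random subgroups and that of SMI, the bridge being Proposition \ref{prop:3}, which asserts that $\Lambda \SMI \Gamma$ holds precisely when $\Lambda$ is a random subgroup of $\Gamma$. Since both directions of that equivalence are available, the entire argument is a three-step composition with no new input required.

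First I would apply the forward direction of Proposition \ref{prop:3} to each hypothesis: from the assumption that $\Lambda_i$ is a random subgroup of $\Gamma_i$ we immediately extract $\Lambda_i \SMI \Gamma_i$ for $i = 1, 2$. Next, Theorem \ref{A} applies verbatim to these two SMI relations and produces $\Lambda_1 * \Lambda_2 \SMI \Gamma_1 * \Gamma_2$. Here only the qualitative conclusion of Theorem \ref{A} is needed; the accompanying index computation (whether the coupling index is $1$ or $\infty$) plays no role, since being a random subgroup depends only on the existence of an SMI-coupling and not on its index.

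Finally I would invoke the reverse direction of Proposition \ref{prop:3}, now applied to the product groups, to convert $\Lambda_1 * \Lambda_2 \SMI \Gamma_1 * \Gamma_2$ back into the statement that $\Lambda_1 * \Lambda_2$ is a random subgroup of $\Gamma_1 * \Gamma_2$, which is exactly the desired conclusion.

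I do not expect any genuine obstacle: every ingredient — both implications of the equivalence in Proposition \ref{prop:3} and the full force of Theorem \ref{A} — has already been established. The only point demanding care is bookkeeping on the direction of the equivalence, namely that one uses the \emph{random-subgroup}-to-SMI implication at the outset (to feed the hypotheses into Theorem \ref{A}) and the SMI-to-\emph{random-subgroup} implication at the end (to read off the conclusion), so that the logical arrows are composed in a consistent order.
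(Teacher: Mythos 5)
Your proposal is correct and is exactly the paper's intended argument: the paper states the corollary is ``immediate using Proposition \ref{prop:3},'' meaning precisely the translation random-subgroup $\Rightarrow$ SMI, then Theorem \ref{A}, then SMI $\Rightarrow$ random-subgroup. Your observation that the index computation in Theorem \ref{A} is irrelevant here is also accurate.
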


\section{Graph Products}
\setcounter{case}{0}

We first state some preliminary definitions for graph products, and a normal form theorem for the elements in a graph product. Throughout this section, the groups are assumed to be finitely generated, and the graphs are finite and simple, i.e. with no edge loops and with no multiple edges between vertices. For proofs and further details, see \cite{Green}.

\begin{defn} (Definition 3.1 in \cite{Green})
Let $\Theta$ be a graph, with vertex set $V\Theta = \{ v_i | i = 1,2, ... , n \}$. Let $\{A_k\}_{k=1}^n$ be a collection of groups. Let $G\Theta \defeq \langle A_k \, | \; [\, A_i,A_j]\, \: \forall (\,v_i, v_j)\, \in E\Theta \rangle$, where \hbox{$E\Theta$ is the edge set of $\Theta$.} We call $G\Theta$ the \textbf{graph product} of the $\{A_k\}$ given by $\Theta$. We call $\Theta$ the \textbf{underlying graph}, and the groups $A_k$ are called the \textbf{generating vertex groups}.
\end{defn}

If $\Theta$ is a complete graph, then the graph product reduces to the direct product of the vertex groups. On the other hand, if there are no edges in $\Theta$, the graph product becomes the free product of the vertex groups. So graph products give us a spectrum of constructions between these two extremities. We have the notion of a word being reduced in graph products analogous to the same notion in free products where no cancellation can happen between the elements.

\begin{defn} (Definition 3.5 in \cite{Green})
\label{def:1}
Let $G\Theta$ be a graph product of the groups $A_1, A_2, ..., A_n$. A sequence $g_1, g_2, ... g_m$ of elements from $G\Theta$ is called $\textbf{reduced}$ if:
\begin{enumerate}[label=(\roman*)]
\item $g_i \in A_i$, for $i = 1,2,...,n$ .
\item $g_i \neq e$, for $i = 1,2,...,n$ .
\item $\forall g_i,g_j$ with $i<j$ for which $\exists k \; i \leq k < j$ with
\begin{align*}
    [\,g_i,g_{i+1}]\, &= [\,g_i,g_{i+2}]\, = ... = [\,g_i,g_k]\, = e \\
    [\,g_{k+1},g_j]\, &= [\,g_{k+2},g_j]\, = ... = [\,g_{j-1},g_j]\, = e
\end{align*}
 $g_i$ and $g_j$ are in different generating groups.
\end{enumerate}
\end{defn}

Part (iii) means that repeatedly swapping elements from generating groups whose corresponding vertices are adjacent in $\Theta$ cannot bring together two terms from the same generating group. We will refer to this swapping of commuting terms as \textbf{syllable shuffling}.

\begin{defn} (Definition 3.7 in \cite{Green})
Let $G\Theta$ be a graph product of the groups $A_1, A_2, ..., A_n$. Fix finitely many generators for each $A_i$, and let $w$ be a word in those generators.
If $w = w_1 w_2 ... w_r$  where each $w_j$ is a word in the generators of only one of the generating groups, no $w_j$ is the empty word, and $w_j$, $w_{j+1}$ are not in the same generating group for $j = 1,2, ...,r-1$, then the \textbf{syllable length}, $\lambda(w)$ of $w$ is $r$, and $w_1, w_2, ..., w_r$ are called the \textbf{syllables} of $w$. The \textbf{syllable length} $\lambda(g)$ of an element $g \in G\Theta$ is the minimal syllable length of a word defining $g$.
\end{defn}

The following theorem, gives a normal form for elements in graph products, which plays the role of reduced words in free products.

\begin{thm} (Theorem 3.9 in \cite{Green})
\label{thm:nf}
Let $G\Theta$ be a graph product of the groups $A_1, A_2, ..., A_n$. Each element $g \neq e$ of $G\Theta$ can be uniquely (up to syllable shuffling) expressed as a product
$$
g = g_1 g_2 ... g_r \quad \text{where} \; g_1, g_2, ..., g_r \; \text{is a reduced sequence.}
$$
\end{thm}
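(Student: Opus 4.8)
The plan is to prove existence by a terminating reduction procedure and uniqueness by the classical van der Waerden method, realizing $G\Theta$ as a group of permutations of the set of normal forms. Write $\sim$ for the equivalence relation on reduced sequences generated by syllable shuffling, and let $\mathcal{N}$ be the set of $\sim$-classes of reduced sequences, including the empty sequence $[\,]$ (which will correspond to $e$). For existence, start from any expression $g = a_1 a_2 \cdots a_k$ with each $a_j$ in some vertex group. Repeatedly apply two moves: delete any syllable equal to $e$, and whenever two syllables lie in the same $A_i$ and can be brought adjacent by syllable shuffling (i.e.\ every intervening syllable commutes with $A_i$), replace them by their product in $A_i$, deleting the result if it is $e$. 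Syllable shuffling preserves syllable length while each of the other moves strictly decreases it, so the procedure terminates and outputs a reduced sequence representing $g$.

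For uniqueness I would build a left action of $G\Theta$ on $\mathcal{N}$. For a vertex group $A_i$ and $a \in A_i$, define $L_a \colon \mathcal{N} \to \mathcal{N}$ on a class represented by a reduced sequence $(g_1, \dots, g_r)$ as follows. If some syllable $g_j$ lies in $A_i$ and every syllable to its left commutes with $A_i$ (equivalently, the class has a representative whose first syllable is in $A_i$), then multiply on the left into that syllable: output the class with $g_j$ replaced by $a g_j$ if $a g_j \neq e$, or with $g_j$ deleted if $a g_j = e$. Otherwise prepend $a$ as a new leading syllable when $a \neq e$, and set $L_e = \id$. The key claim is that $a \mapsto L_a$ is well defined on $\sim$-classes and satisfies the defining relations of the graph product: $L_a L_b = L_{ab}$ for $a,b \in A_i$, and $L_a L_b = L_b L_a$ whenever $(v_i, v_j) \in E\Theta$. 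Granting this, the universal property of the presentation of $G\Theta$ yields a homomorphism $G\Theta \to \operatorname{Sym}(\mathcal{N})$.

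The conclusion then follows by evaluating at the empty class: an induction on syllable length shows that if $(g_1, \dots, g_r)$ is reduced, then $(g_1 \cdots g_r) \cdot [\,] = [(g_1, \dots, g_r)]$, so the orbit map $g \mapsto g \cdot [\,]$ recovers the reduced sequence up to shuffling, forcing any two reduced expressions of $g$ to be $\sim$-equivalent. I expect the main obstacle to be precisely the verification in the previous paragraph: showing that $L_a$ is independent of the chosen shuffled representative, and checking the commutation relations $L_a L_b = L_b L_a$ for adjacent vertices. The latter requires a case analysis according to whether the sequence already contains frontable syllables in $A_i$, in $A_j$, or in both, and reconciling the two orders of prepending in each case; this bookkeeping, driven by the interaction between syllable shuffling and the ``frontable syllable'' test, is where essentially all of the combinatorial content of the normal form theorem resides.
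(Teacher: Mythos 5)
The paper itself contains no argument to compare against: Theorem \ref{thm:nf} is quoted from Green's thesis and its proof is deferred there (``See \cite{Green}''). Your outline is the standard van der Waerden-style argument---realizing $G\Theta$ inside $\operatorname{Sym}(\mathcal{N})$ for $\mathcal{N}$ the set of shuffle-classes of reduced sequences---which is in the same spirit as the argument in the cited source, so the plan itself is the right one. Your existence half is complete and correct: shuffles preserve syllable length, deletions and amalgamations strictly decrease it, and a sequence admitting no such move is by definition reduced, so the rewriting terminates.

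The uniqueness half, however, is an outline rather than a proof: you reduce everything to the ``key claim'' that $L_a$ is well defined on $\sim$-classes, preserves reducedness, and satisfies the defining relations of $G\Theta$, and you explicitly leave that claim unverified. Since the universal property then does the rest for free, this claim \emph{is} the theorem; stopping there means the proposal stops exactly where the proof begins. For the record, the verifications do go through, but each needs an argument you have not supplied: (i) a reduced sequence has at most one frontable syllable in a given $A_i$ (if $g_j$ and $g_k$, $j<k$, both lie in $A_i$ and $g_k$ is frontable, then every syllable between them commutes with $A_i$, so $g_j$ and $g_k$ could be shuffled adjacent, contradicting reducedness); without this, your case split defining $L_a$ is not even well posed. (ii) In the deletion case $ag_j=e$, removing $g_j$ could a priori make two syllables lying on opposite sides of $g_j$ mergeable; this is ruled out because everything to the left of the frontable $g_j$ commutes with $A_i$, hence with $g_j$ itself, so $g_j$ was never the obstruction separating such a pair---but this must be said. (iii) The identities $L_aL_b=L_{ab}$ for $a,b\in A_i$ and $L_aL_b=L_bL_a$ for $(v_i,v_j)\in E\Theta$ require the case analysis you describe (frontable syllable in $A_i$ only, in $A_j$ only, in both, in neither), and the ``both'' case in particular needs the observation in (i) applied simultaneously to two vertex groups. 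Until these are written out, the homomorphism $G\Theta\to\operatorname{Sym}(\mathcal{N})$, and with it uniqueness, has not been established.
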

\begin{proof}
See \cite{Green}.
\end{proof}

The \textbf{link of a vertex} v, denoted $\textbf{lk(v)}$, is the set of all vertices that are connected to v. The \textbf{star of a vertex} v, denoted $\textbf{st(v)}$, is $lk(v) \cup \{v\}$.

\begin{prop}
\label{prop:2}
Let $G\Theta$ be a graph product of the groups $A_1, A_2, ..., A_n$. Fix a vertex generator $A_k$, and let $L$ be the standard subgroup of $G\Theta$ generated by $lk(v_k)$, where $v_k$ is the vertex corresponding to $A_k$. Then, each element $g \in G\Theta$ can be expressed as a product $g = alh$ where:
\begin{enumerate}[label=(\roman*)]
    \item $a \in A_k$
    \item $l$ is a reduced word in $L$.
    \item $h$ is a reduced word starting with an element of $A_m$, where the corresponding vertex is not in $st(v_k)$, or $h=e$
\end{enumerate}
\end{prop}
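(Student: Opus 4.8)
The plan is to prove the factorization by strong induction on the syllable length $r = \lambda(g)$, peeling one syllable off the front of a reduced expression at each step. First I would record the one structural fact that makes everything work: since every vertex of $lk(v_k)$ is joined to $v_k$ by an edge, the group $A_k$ commutes elementwise with $L = \langle lk(v_k)\rangle$, so that $\langle st(v_k)\rangle = A_k \times L$ (here $L$ is itself the standard subgroup, i.e. the graph product over the induced subgraph on $lk(v_k)$, so it makes sense to speak of a reduced word in $L$). This commutation is exactly what will let me slide an $A_k$-syllable past an $L$-syllable when collecting terms to the left.

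For the induction, the base case $r=0$ is $g=e$, handled by $a=l=h=e$. For the inductive step I would fix a reduced expression $g = g_1 g_2 \cdots g_r$ via Theorem \ref{thm:nf}, with $g_1 \in A_j$, and observe that the suffix $g' = g_2 \cdots g_r$ is again a reduced sequence, since conditions (i)--(iii) of Definition \ref{def:1} restrict to subintervals of indices; hence $\lambda(g') \le r-1 < r$ and the induction hypothesis applies to $g'$. There are three cases according to the position of $v_j$. If $v_j \notin st(v_k)$, then $g$ itself is a reduced word whose first syllable already lies outside $st(v_k)$, so I simply take $a = l = e$ and $h = g$. If $v_j = v_k$, so $g_1 \in A_k$, I apply the hypothesis to write $g' = a'l'h'$ and then absorb $g_1$ into the $A_k$-part, setting $a = g_1 a' \in A_k$, $l = l'$, $h = h'$. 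If $v_j \in lk(v_k)$, so $g_1 \in L$, I again write $g' = a'l'h'$ and use the structural fact $g_1 a' = a' g_1$ to rearrange $g = g_1 a' l' h' = a'(g_1 l') h'$, setting $a = a'$, $l = g_1 l' \in L$, $h = h'$. In all three cases the final factor $h$ equals either $g$ or the inductively produced $h'$, so it retains the property of being trivial or a reduced word starting outside $st(v_k)$, and the induction closes.

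The step that needs the most care is the bookkeeping in the $v_j \in lk(v_k)$ case, where the commutation $g_1 a' = a' g_1$ must be invoked to move $g_1$ to the correct side and land it inside $L$, together with checking that removing the leading syllable genuinely preserves reducedness so that the induction parameter strictly decreases. I do not expect a serious obstacle here, and in particular the case $v_j \notin st(v_k)$ is immediate precisely because the proposition asserts only the \emph{existence} of such a factorization: it does not demand that $h$ start outside $st(v_k)$ in every reduced expression, nor any maximality of the extracted $al$-part, so there is nothing further to verify in that branch.
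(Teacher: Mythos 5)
Your proof is correct, and the two ingredients you isolate---Theorem \ref{thm:nf} and the elementwise commutation of $A_k$ with $L$ (every vertex of $lk(v_k)$ is adjacent to $v_k$)---are exactly the ones the paper uses; however, the organization is genuinely different. The paper argues in one shot rather than by induction: it writes $g = g_1 g_2 \cdots g_r$ in normal form, shuffles an $A_k$-syllable to the front when that is possible (setting $a = g_1$, otherwise $a = e$), lets $g_k$ be the first syllable whose vertex lies outside $st(v_k)$, and takes $l$ to be the intervening block and $h = g_k \cdots g_r$. That one-shot argument quietly relies on some bookkeeping: the prefix $g_1,\ldots,g_{k-1}$ can contain at most one $A_k$-syllable (two of them would violate condition (iii) of Definition \ref{def:1}, since the $L$-syllables separating them commute with $A_k$), and that single syllable commutes past the preceding $L$-syllables, after which the remaining prefix lies entirely in $L$. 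Your induction on syllable length makes this bookkeeping unnecessary: leading $A_k$-syllables are absorbed into $a'$ by multiplication inside $A_k$, leading $lk(v_k)$-syllables are absorbed into $l'$ after one application of $g_1 a' = a' g_1$, and since contiguous subsequences of reduced sequences are reduced, the induction parameter strictly decreases. So your route is a bit longer but is self-contained and careful at precisely the point where the paper's proof is terse, while the paper's version is shorter because it reads the whole decomposition off the normal form at once; both prove only the existence statement that is actually needed in the proof of Theorem \ref{thm:2}.
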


\begin{proof}
For $g=e$, we just have $a=l=g=e$. \hfill

For $g \neq e$, write $g = g_1 g_2 ... g_r $ as in Theorem \ref{thm:nf}. Note that, either it is possible to have $g_1 \in A_k$ after syllable shuffling, in which case $a=g_1$; or not, in which case $a=e$. Then, let $g_k$ be the first element not in $A_k$ or $H$. In case $a=g_1$, taking $l = g_2 g_3 ... g_{k-1}$ and $h = g_k g_{k+1} ... g_r$ gives the result. (If $k=2$, $l=e$) In case $a=e$, taking $l = g_1 g_2 g_3 ... g_{k-1}$ and $h = g_k g_{k+1} ... g_r$ gives the result. (If $k=1$, $l=e$) If such a $g_k$ does not exist, just take $l = a^{-1}g$ and $h=e$. These give the result in all possible cases.
\end{proof}

It is straightforward to show that if $\Lambda_i \SMI \Gamma_i$ with index $c_i$ for i=1,2; then $\Lambda_1 \times \Lambda_2 \SMI \Gamma_1 \times \Gamma_2$, and there is a $(\Lambda_1 \times \Lambda_2 \rightarrowtail \Gamma_1 \times \Gamma_2)$-coupling with index $c_1c_2$, with the convention $\infty \cdot c = \infty$. (Use the direct product of $(\Lambda_i \rightarrowtail \Gamma_i)$-couplings with the natural action of $\Lambda_1 \times \Lambda_2$ and $\Gamma_1 \times \Gamma_2$.) Indeed the same result holds if we replace SMI with MI. As the direct product case is covered using these observations, we will not deal with the graph products which split into direct products, so we need the following notion of irreducibility for graphs:

\begin{defn} 
Given two graphs $\Theta_1$ and $\Theta_2$, define their \textbf{join}, denoted $\Theta_1 \circ \Theta_2$, as the graph we get by connecting every vertex of $\Theta_1$ to every vertex of $\Theta_2$ with an edge. More rigorously: 
\begin{align*}
    V(\Theta_1 \circ \Theta_2) &= V\Theta_1 \cup V\Theta_2 \\
    E(\Theta_1 \circ \Theta_2) &= E\Theta_1 \cup E\Theta_2 \cup \{(v_1,v_2) | v_1 \in V\Theta_1, v_2 \in V\Theta_2\}
\end{align*}
A graph $\Theta$ is called \textbf{irreducible}, if there are no pair of nonempty full subgraphs $\Theta_1$ and $\Theta_2$, such that $\Theta = \Theta_1 \circ \Theta_2$.

\end{defn}

Theorem \ref{B} follows from the following result by using transitivity of SMI:

\begin{thm}
\label{thm:2}
Let $\Theta$ be a finite simple irreducible graph with vertex set $V$. Let $H$ and $G$ be two graph products over $\Theta$, with nontrivial finitely generated vertex groups $\{H_v\}_{v \in V}$ and $\{G_v\}_{v \in V }$, respectively. Suppose there exists $w \in V$ suvh that, $H_w \SMI G_w$ with coupling index $c$, and $H_v$ and $G_v$ are isomorphic for all $v \in V, \; v\neq w$. Then $H \SMI G$ with coupling index $\infty$, unless when $c=1$, in which case the coupling index is also 1.
\end{thm}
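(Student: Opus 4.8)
The plan is to follow the strategy of Theorem \ref{thm:1}: first produce an action and a cocycle satisfying the hypothesis of Proposition \ref{prop:1} (which yields the coupling and the relation $H\SMI G$), and then exhibit an explicit $H$-fundamental domain to read off the index. Fix the SMI-data for $H_w\SMI G_w$: a coupling $\Sigma_w$, fundamental domains $X\subset Y_w$ with $\mu(X)=1$ and $\mu(Y_w)=c$, the action $H_w\acts X$, and the cocycle $\alpha:H_w\times X\to G_w$ with $\alpha(\lambda,x)=e\iff\lambda=e$; also fix isomorphisms $\phi_v:H_v\to G_v$ for $v\neq w$. I would let $H$ act on $X$ with $H_w$ acting by its given action and every other $H_v$ acting trivially; since trivial actions commute with everything, all defining commutation relations are respected and this is a genuine $H$-action, factoring through the retraction $\rho:H\to H_w$. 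Then I define $\tilde\alpha:H\times X\to G$ by $\tilde\alpha|_{H_w}=\alpha$ and $\tilde\alpha(h_v,x)=\phi_v(h_v)$ for $v\neq w$, extended by the cocycle identity; well-definedness reduces to the edge relations, which hold because for $(v,v')\in E\Theta$ the relevant values lie in vertex groups $G_v,G_{v'}$ (or $G_w,G_{v'}$ when $v=w$) that commute in $G$.

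To invoke Proposition \ref{prop:1} I must check $\tilde\alpha(h,x)=e\iff h=e$. Writing $h\neq e$ as a reduced word $h_1\cdots h_r$ with $h_i\in H_{v_i}\setminus\{e\}$, the cocycle identity gives $\tilde\alpha(h,x)=g_1\cdots g_r$ with $g_i\in G_{v_i}$, and each $g_i\neq e$ (from $\alpha(\lambda,x)=e\iff\lambda=e$ when $v_i=w$, and from injectivity of $\phi_{v_i}$ otherwise). Crucially, whether a string of nontrivial syllables is reduced depends only on the vertex sequence and $\Theta$, since nontrivial elements of distinct vertex groups commute exactly when the vertices are adjacent; hence $(g_1,\ldots,g_r)$ is reduced and $\tilde\alpha(h,x)\neq e$ by Theorem \ref{thm:nf}. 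Proposition \ref{prop:1} then produces the coupling $\tilde\Sigma=G\times X$ (counting measure on $G$, $\mu$ on $X$) with $G$-fundamental domain $\{e\}\times X$ of measure $1$, proving $H\SMI G$.

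For the index I would identify $\tilde\Sigma\cong W\times\Sigma_w$, where $W$ is a transversal for the cosets $G/G_w$ and $\Sigma_w\cong G_w\times X$, arranged so that $H_w$ acts inside each copy $\{\omega\}\times\Sigma_w$ by its $\Sigma_w$-action while the remaining vertex groups permute the copies, and I would claim that $\tilde Y\defeq(\{e\}\times X)\sqcup(W\times(Y_w\setminus X))$ is an $H$-fundamental domain. Then $\tilde\mu(\tilde Y)=1+[G:G_w]\cdot\mu(Y_w\setminus X)=1+[G:G_w](c-1)$, which equals $1$ when $c=1$ (as $Y_w=X$) and equals $\infty$ once $c>1$ and $[G:G_w]=\infty$. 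Irreducibility of $\Theta$ supplies the latter: were every $v\neq w$ adjacent to $w$, then $\Theta=\{w\}\circ(\Theta\setminus\{w\})$ would be a join; so some $v$ is non-adjacent to $w$, whence $\langle G_v,G_w\rangle=G_v*G_w$ and $[G:G_w]\geq[G_v*G_w:G_w]=\infty$.

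The main obstacle is verifying that $\tilde Y$ is a fundamental domain, the analog of the three-case computation in Theorem \ref{thm:1}. That it meets every orbit should go as before ($H_w$ fills each copy from $\{\omega\}\times Y_w$, and the $H_v$ with $v\neq w$ collapse the shared pieces $\{\omega\}\times X$ onto $\{e\}\times X$); the delicate direction, $h\bullet y\notin\tilde Y$ for $e\neq h\in H$ and $y\in\tilde Y$, I would organize by the decomposition $h=a\ell h'$ of Proposition \ref{prop:2}, with $\ell h'$ playing the role of the word beginning in the common factor and $a\in H_w$ the leading letter, using $X\subset Y_w$ together with equations (\ref{eq:1})--(\ref{eq:2}) exactly as in Theorem \ref{thm:1}. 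The genuinely new feature is that the link letters $H_v$ with $v\in\mathrm{lk}(w)$ commute with $H_w$, so they shuffle the copies while also touching the $G_w$-coordinate; controlling this bookkeeping through Proposition \ref{prop:2} is where the real care will be required.
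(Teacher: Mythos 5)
Your first half is sound: building the coupling $\widetilde\Sigma=G\times X$ from the extended cocycle, and verifying $\tilde\alpha(h,x)=e\iff h=e$ by noting that reducedness of a syllable sequence depends only on the vertex pattern, is correct, and is in fact a more direct verification than the paper's (which routes this through Proposition \ref{prop:2}). So $H\SMI G$ is established. The genuine gap is in the index computation: your set $\tilde Y=(\{e\}\times X)\sqcup\bigl(W\times(Y_w\setminus X)\bigr)$, with $W$ a \emph{full} transversal of $G/G_w$, is not a fundamental domain once $c>1$ and $\mathrm{lk}(w)\neq\emptyset$. In the coordinates $W\times G_w\times X$, a letter $h\in H_u$ with $u\in\mathrm{lk}(w)$ acts by $h\bullet(\omega,\gamma,x)=(\omega\phi_u(h)^{-1},\gamma,x)$: because $G_u$ commutes elementwise with $G_w$, right multiplication by $\phi_u(h)^{-1}$ preserves the transversal $W$ and leaves the $(\gamma,x)$-coordinates \emph{unchanged} (contrary to your remark, link letters never touch the $G_w$-coordinate; only letters from vertices outside $\mathrm{st}(w)$ collapse it). Hence for $\gamma x\in Y_w\setminus X$ the distinct points $(\omega,\gamma,x)$ and $(\omega\phi_u(h)^{-1},\gamma,x)$ both lie in $\tilde Y$ and in the same $H$-orbit, so $\tilde Y$ meets orbits more than once; in your planned case analysis via Proposition \ref{prop:2}, this is exactly the case $a=h'=e$, $\ell\neq e$ a word in the link letters, and it fails.

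The missing idea---the key new step in the paper's proof relative to Theorem \ref{thm:1}---is to observe that $H_{\mathrm{lk}(w)}$ acts on $W\cong G/G_w$ by right multiplication (well defined precisely because the link groups commute with $G_w$), to choose a fundamental domain $\widetilde W\subset W$ for \emph{this} action, and to take $\widetilde Y=(\{e\}\times X)\sqcup\bigl(\widetilde W\times(Y_w\setminus X)\bigr)$; the troublesome case above is then resolved because $\omega\phi_u(h)^{-1}\notin\widetilde W$ whenever $\omega\in\widetilde W$ and $h\neq e$. This also relocates where irreducibility enters. You use it to get $[G:G_w]=\infty$, but that is the wrong count: the index is $1+|\widetilde W|\,\mu(Y_w\setminus X)$, so what must be infinite is the number of $H_{\mathrm{lk}(w)}$-orbits on $G/G_w$, not $|G/G_w|$ itself (if $V=\mathrm{st}(w)$ one has $[G:G_w]=\infty$ yet $\widetilde W=\{e\}$ and the index would be $c$). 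Irreducibility supplies a vertex $u\notin\mathrm{st}(w)$, and then elements such as $(\gamma g_u)^n$ with $e\neq\gamma\in G_w$, $e\neq g_u\in G_u$ represent infinitely many distinct $H_{\mathrm{lk}(w)}$-orbits, so $\widetilde W$ is infinite and the index is $\infty$. Your final answer is correct, but your argument as written does not prove it.
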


\begin{proof}
Set $\Lambda = H_w$, $\Gamma = G_w$. Fix a $(\Lambda,\Gamma)$-coupling $(\Sigma,\mu)$, fundamental domains $Y \cong \Sigma/\Lambda$ and $X\cong \Sigma/\Gamma$, such that $X \subset Y$ and $\mu(X) = 1$. Then we have the action $\Lambda \acts X$ and the corresponding cocycle $\alpha: \Lambda \times X \rightarrow \Gamma$ so that $\lambda \cdot x = \alpha(\lambda,x)\lambda x$.
\hfill

For each $v\neq w$, let $H_v$ act on $X$ trivially. That gives an action of the graph product $H$ on $X$, still denoted by $\cdot$. Extend the cocycle $\alpha$ to $\tilde{\alpha}: H\times X \rightarrow G$ by setting $\tilde{\alpha}(h,x) = h$, for $h \in H_v$, $v\neq w$ and using the cocycle identity $\tilde{\alpha}(gh,x) = \tilde{\alpha}(g,h \cdot x) \tilde{\alpha}(h,x)$  
\hfill

Set $\widetilde{\Sigma} = (G\times X, \tilde{\mu})$, where $\tilde{\mu}$ is the induced product measure.  We define the following actions:
 \begin{align*}
     G \acts \widetilde{\Sigma}   \quad  w \cdot (\bar{w},x) &= (w\,\bar{w},x) \\
     H \acts \widetilde{\Sigma}  \quad  w \cdot (\bar{w},x) &= (\bar{w}\,\tilde{\alpha}(w,x)^{-1},w \cdot x)
 \end{align*}
 
 As before, this is the same construction from the proof of Proposition \ref{prop:1}, so the same remarks hold. We can use Proposition \ref{prop:2} to show that the cocycle $\tilde{\alpha}$ satisfies the hypothesis of Proposition \ref{prop:1}, so we can already conclude that $H \SMI G$. In order to calculate the coupling index, we will give an explicit description of an $H$-fundamental domain.
 \hfill

 Letting $\Gamma \acts G$ by right multiplication, we have $G \cong G/\Gamma \times \Gamma$. By seeing elements of $G$ as reduced words (as in Definition \ref{def:1}) up to syllable shuffling, and choosing representatives of minimal word length from $G/\Gamma$ gives the identification $G/\Gamma \cong W$, where
$$ W \defeq \{w \in G | \, w  \text{ is not equal to a nontrivial reduced word ending with an element of } \Gamma \} \cup \{e\} $$
\hfill

This gives us the identification $\widetilde{\Sigma} \cong W \times \Gamma \times X$. The action of $H$ making this identification $H$-equivariant is defined by:
\begin{align*}
    \lambda \bullet (w, \gamma, x) &= (w, \gamma \alpha(\lambda,x)^{-1}, \lambda \cdot x) \quad &\text{for}&\, \lambda \in \Lambda = H_v \\
     h \bullet (w, \gamma, x) &= (w h^{-1}, \gamma, x) \quad &\text{for}&\, h \in H_w \, \text{, where}\, w\in lk(v) \\
    h \bullet (w, \gamma, x) &= (w\gamma h^{-1}, e, x) \quad &\text{for}&\, h \in H_w \,\text{, where}\, w\notin st(v)
\end{align*}

Let $H_{lk(v)} < H$ be the standard subgroup of $H$ generated by $lk(v)$. Note $H_{lk(v)} \acts W$ by right multiplication. Let $\widetilde{W} \subset W$ be a fundamental domain for this action, for example, choose the minimal length word from each $H_{lk(v)}$-orbit. Note that if $V\Theta = st(v)$, then $\Gamma$ would be in the center of $G$, and we would have $W=H_{lk(v)}$, and  $\widetilde{W} = \{e\}$. However, that would mean $\Theta = \{v\} \circ lk(v)$, contradicting irreducibility. Hence, $W$ properly contains $H_{lk(v)}$, and in this case it is easy to observe that $\widetilde{W}$ is nontrivial. This is the only place we use irreducibility of $\Theta$.

Identifying $W \times \Gamma \times X \cong W \times \Sigma$ equivariantly, we claim $\widetilde{Y} \defeq ( \{e\} \times X) \bigsqcup (\widetilde{W} \times (Y\setminus X))$ is a fundamental domain for G. 

\vspace{1mm}
Showing $G \bullet \widetilde{Y} = W \times \Gamma \times X$ is similar to the free product case after we observe \hbox{$W \times \Sigma \subset H_{lk(v)} \bullet (\widetilde{W} \times \Sigma)$.} Hence, $\widetilde{Y}$ intersects every $G$-orbit.

\vspace{1mm}
It remains to show that for any nontrivial element $w \in H$ and $y \in \widetilde{Y}$, $w \cdot y \notin \widetilde{Y}$. Using Proposition \ref{prop:2}, such an element $w$ can be written of the form $w = \lambda gh$ where $\lambda \in \Lambda$, $g$ is a reduced word in $H_{lk(v)}$, and $h \in H$ is a reduced word starting with an element not in $H_{lk(v)}$ or $\Lambda$. Also note that $(w,\gamma,x) \in \widetilde{Y}$ if and only if $w=\gamma=e$ corresponding to $\{e\} \times X$ or $w\in \widetilde{W}$ and $\gamma x \in Y$ as an element in $\Sigma$. As in the proof of Theorem \ref{thm:1}, we have (\ref{eq:1}), and letting $p:H \rightarrow \Gamma$ be the retraction, we have (\ref{eq:2}). For such h and g, we can compute the action as:
\begin{align*}
    h \bullet (w, \gamma, x) &= (w\gamma \tilde{\alpha}(h,x)^{-1},e,h\cdot x) \\
    g \bullet (w, \gamma, x) &= (w g^{-1},\gamma,x)
\end{align*}

We have seven cases, depending on whether $\lambda$, $g$, or $h$ are trivial:

\begin{case}
$\lambda$, $g$, $h$ are non-trivial.
\\
Let $(w,\gamma,x) \in \widetilde{Y}$. Then,
\begin{equation*}
        \lambda g h \bullet (w,\gamma,x) = (w \gamma \tilde{\alpha}(h,x)^{-1} g^{-1}, \alpha(\lambda, h \cdot x)^{-1}, \lambda h \cdot x) \notin \widetilde{Y}
    \end{equation*}
    as  $\alpha(\lambda,h \cdot x) \neq e$ by (\ref{eq:1}), and
    \begin{align*}
        \alpha(\lambda, h \cdot x)^{-1} (\lambda h \cdot x) &= \alpha(\lambda, p(h) \cdot x)^{-1} \alpha(\lambda p(h),x)\lambda p(h) x \quad & \text{by }(\ref{eq:2}) \\
        &= \alpha(p(h),x)\lambda p(h)x & \text{by cocycle identity} \\
        &= \lambda (p(h) \cdot x) & \text{by }(\ref{eq:2}) 
    \end{align*}
    $\lambda (p(h) \cdot x) \notin \widetilde{Y}$ as $p(h) \cdot x \in X \subset Y$ and $\lambda \neq e$. 
\end{case}

\begin{case}
$\lambda = e$, $g$ and $h$ are non-trivial.
    \begin{equation*}
        g h \bullet (e,e,x) = (\tilde{\alpha}(h,x)^{-1} g^{-1}, e, h \cdot x) \notin \widetilde{Y}
    \end{equation*}
    as $\alpha(h,x)^{-1} g^{-1} \neq e$ 
    \hfill
    
    For $w \in \widetilde{W}$, $\gamma \neq e$ and $\gamma x \in Y$:
   \begin{equation*}
        g h \bullet (w,\gamma,x) = (w \gamma \tilde{\alpha}(h,x)^{-1} g^{-1}, e, h \cdot x) \notin \widetilde{Y}
    \end{equation*}
    as $w \gamma \tilde{\alpha}(h,x)^{-1} g^{-1} \neq e$. Indeed, $\gamma x \in Y$ implies $\alpha(\lambda,x) \neq \gamma$ for all $\lambda \in \Lambda$ and for a.e. $x \in X$, so $\gamma$ inside the word will not be cancelled.
\end{case}

\begin{case}
$g = e$, $\lambda$ and $h$ are non-trivial.
\hfill

Same as Case 1 from the proof of Theorem \ref{thm:1}.
\end{case}

\begin{case}
$h = e$, $\lambda$ and $g$ are non-trivial.
    \begin{equation*}
        \lambda g \bullet (e,e,x) = (g^{-1}, \alpha(\lambda, x)^{-1}, \lambda  \cdot x) \notin \widetilde{Y}
    \end{equation*}
    as $g \neq e$, and $\alpha(\lambda, x)^{-1} (\lambda  \cdot x) = \lambda x \notin Y$.
    \hfill
    
    For $w \in \widetilde{W}$, $\gamma \neq e$ and $\gamma x \in Y$:
\begin{equation*}
        \lambda g \bullet (w,\gamma,x) = (w g^{-1}, \gamma \alpha(\lambda, x)^{-1}, \lambda  \cdot x) \notin \widetilde{Y}
    \end{equation*}
    as $\gamma \alpha(\lambda, x)^{-1} \neq e$ and $\gamma \alpha(\lambda, x)^{-1} (\lambda  \cdot x) = \lambda \gamma x \notin Y$.
\end{case}

\begin{case}
$\lambda = g = e$, and $h$ is non-trivial.
\hfill

Same as Case 2 from the proof of Theorem \ref{thm:1}.
\end{case}

\begin{case}
$h = g = e$, and $\lambda$ is non-trivial.
\hfill

Same as Case 3 from the proof of Theorem \ref{thm:1}.
\end{case}

\begin{case}
$\lambda = h = e$, and $g$ is non-trivial.
\begin{equation*}
        g \bullet (e,e,x) = (g^{-1}, e, x) \notin \widetilde{Y}
    \end{equation*}
    as $g \neq e$.
\hfill

For $w \in \widetilde{W}$, $\gamma \neq e$ and $\gamma x \in Y$:
    \begin{equation*}
        g \bullet (w,\gamma,x) = (w g^{-1}, \gamma, x) \notin \widetilde{Y}
    \end{equation*}
    as $w g^{-1} \notin \widetilde{W}$ by the definition of $\widetilde{W}$.
\end{case}

The calculation of the index of this coupling is same as the one in the proof of Theorem \ref{thm:1}. 
\end{proof}

\begin{proof}[Proof of Theorem \ref{B}]
The final coupling we get at the end will be the composition of all the intermediate couplings described in Theorem \ref{thm:2}, with index equal to the product of the intermediate indices. If $c_v = 1$ for all $v$, then all of the intermediate couplings will also have index 1, so product will be 1. Otherwise, at least one of them will have infinite index, so the product will be $\infty$.
\end{proof}

\begin{rem}
The same result holds without the irreducibility assumption up to the index calculation. Without irreducibility, we might encounter cases where $\widetilde{W} = {e}$, so the coupling index can be $\tilde{\mu}(\widetilde{Y}) = \mu(Y)$. That means for such intermediate couplings, the index will be $c_v$. In particular, this gives another proof for the case of direct products.
\end{rem}

We finish by combining this remark with Proposition \ref{prop:3}:

\begin{cor}
Let $\Theta$ be a finite simple graph with vertex set $V$. Let $H$ and $G$ be two graph products over $\Theta$, with nontrivial finitely generated vertex groups $\{H_v\}_{v \in V}$ and $\{G_v\}_{v \in V }$, respectively. Suppose for each $v \in V$, $H_v$ is a random subgroup of $G_v$. Then $H$ is a random subgroup of $G$.
\end{cor}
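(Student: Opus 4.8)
The plan is to prove the statement entirely by unwinding the dictionary of Proposition \ref{prop:3} at both ends and invoking the graph-product machinery already assembled in Theorem \ref{thm:2}. First I would rephrase everything in the language of SMI: by Proposition \ref{prop:3}, the hypothesis that each $H_v$ is a random subgroup of $G_v$ is literally the statement $H_v \SMI G_v$, and the desired conclusion that $H$ is a random subgroup of $G$ is literally $H \SMI G$. So it suffices to establish $H \SMI G$ from the hypotheses $H_v \SMI G_v$ for all $v \in V$, where now $\Theta$ is an arbitrary finite simple graph (no irreducibility assumed).

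The core of the argument is then Theorem \ref{B}, but that result is stated only for irreducible $\Theta$, so the one genuine point is to remove the irreducibility hypothesis. There are two equivalent ways to do this, and I would cite the first. The remark immediately preceding the corollary records that the \emph{SMI conclusion} of Theorem \ref{thm:2} survives when irreducibility is dropped: irreducibility was used only to guarantee that $\widetilde{W}$ is nontrivial, which affects the index computation (it may now equal $c_v$ rather than $\infty$) but not the existence of the coupling. Hence $H \SMI G$ follows from Theorem \ref{thm:2} verbatim for general $\Theta$. Alternatively, and more conceptually, I would decompose $\Theta$ uniquely as a join $\Theta = \Theta_1 \circ \cdots \circ \Theta_k$ of irreducible full subgraphs, so that $H \cong \prod_{i} H_{\Theta_i}$ and $G \cong \prod_{i} G_{\Theta_i}$; apply Theorem \ref{B} to each irreducible factor to get $H_{\Theta_i} \SMI G_{\Theta_i}$; and then assemble the factors using the direct-product observation recorded just before the definition of irreducibility, namely that $\Lambda_j \SMI \Gamma_j$ for $j=1,2$ implies $\Lambda_1 \times \Lambda_2 \SMI \Gamma_1 \times \Gamma_2$ via the product of the couplings. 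Iterating this over the $k$ factors yields $H \SMI G$.

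Finally, applying Proposition \ref{prop:3} once more translates $H \SMI G$ back into the assertion that $H$ is a random subgroup of $G$, which completes the proof. I do not expect any real obstacle here: the two invocations of Proposition \ref{prop:3} are immediate, and the only substantive step, the passage from irreducible to general graphs, is exactly what the preceding remark handles (or, via the join decomposition, what the direct-product preservation of SMI handles). The corollary is thus a clean combination of the equivalence of Proposition \ref{prop:3} with the graph-product result, and no index tracking is required since the conclusion only asserts the existence of a randembedding.
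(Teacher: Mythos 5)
Your proposal is correct and takes essentially the same route as the paper: the paper derives this corollary precisely by combining the remark that Theorem \ref{thm:2} survives without irreducibility (only the index computation changes, which is irrelevant here) with the two translations supplied by Proposition \ref{prop:3} between ``random subgroup'' and SMI. Your alternative argument via the join decomposition of $\Theta$ into irreducible factors, Theorem \ref{B} on each factor, and the direct-product preservation of SMI is also sound, but it is supplementary rather than the path the paper follows.
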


\begin{bibdiv}
\begin{biblist}

    \bib{BerVae}{article}{
        author={Berendschot, Tey},
        author={Vaes, Stefaan},
        title={Measure equivalence embeddings of free groups and free group factors},
        }
        
    \bib{Furman}{article}{
        author={Furman, Alex},
        title={A survey of measured group theory},
        conference={
            title={Geometry, rigidity, and group actions},
            },
        book={
            series={Chicago Lectures in Math.},
            publisher={Univ. Chicago Press, Chicago, IL},
            },
        date={2011},
        pages={296--374},
        }
        
    \bib{Gab}{article}{
        author={Gaboriau, D.},
        title={Examples of groups that are measure equivalent to the free group},
        journal={Ergodic Theory Dynam. Systems},
        volume={25},
        date={2005},
        number={6},
        pages={1809--1827},
        }

    \bib{GabLy}{article}{
       author={Gaboriau, Damien},
       author={Lyons, Russell},
       title={A measurable-group-theoretic solution to von Neumann's problem},
       journal={Invent. Math.},
       volume={177},
       date={2009},
       number={3},
       pages={533--540},
        }	
    
    \bib{Green}{thesis}{
        author={Green, Eli R.},
        title={Graph Products of Groups},
        type={Ph.D. Thesis},
        organization = {University of Leeds},
        date={1990}
        }	
        
    \bib{Gro}{article}{
        author={Gromov, M.},
        title={Asymptotic invariants of infinite groups},
        conference={
            title={Geometric group theory, Vol. 2},
            address={Sussex},
            date={1991},
            },
        book={
            series={London Math. Soc. Lecture Note Ser.},
            volume={182},
            publisher={Cambridge Univ. Press, Cambridge},
            },
        date={1993},
        pages={1--295},
        }

    \bib{HH}{article}{
        author={Horbez, Camille},
        author={Huang, Jingyin},
        title={Measure equivalence classification of transvection-free right-angled Artin groups},
        journal={J. \'{E}c. polytech. Math.},
        volume={9},
        date={2022},
        pages={1021--1067},
        }

    \bib{Mon}{article}{
       author={Monod, Nicolas},
       title={An invitation to bounded cohomology},
       conference={
          title={International Congress of Mathematicians. Vol. II},
       },
       book={
          publisher={Eur. Math. Soc., Z\"{u}rich},
       },
       date={2006},
       pages={1183--1211},
        }
        
    \bib{Sak}{article}{
       author={Sako, Hiroki},
       title={The class $S$ as an ME invariant},
       journal={Int. Math. Res. Not. IMRN},
       date={2009},
       number={15},
       pages={2749--2759},
        }

\end{biblist}
\end{bibdiv}

\end{document}